
\NeedsTeXFormat{LaTeX2e}

\documentclass{lms}
\usepackage{amsmath,amssymb,amscd,amsfonts}
\usepackage{graphicx,subfigure}
\usepackage{hyperref}
\usepackage{dsfont}
\usepackage{color}
\usepackage{float}

\usepackage{anysize}
\usepackage{color}
\usepackage{paralist}

\usepackage[arrow,curve,matrix,tips,2cell]{xy}  
  \SelectTips{eu}{10} \UseTips
  \UseAllTwocells
  \usepackage{tikz,calc,etex}
  \usetikzlibrary{matrix, arrows, calc,backgrounds,fit,shapes}

\newtheorem{theorem}{Theorem}[section] 
\newtheorem{corollary}[theorem]{Corollary}

\newtheorem{claim}[theorem]{Claim}

\newnumbered{assertion}{Assertion}    
\newnumbered{conjecture}{Conjecture}  
\newnumbered{definition}{Definition}
\newnumbered{hypothesis}{Hypothesis}
\newnumbered{remark}{Remark}
\newnumbered{note}{Note}
\newnumbered{observation}{Observation}
\newnumbered{problem}{Problem}
\newnumbered{question}{Question}
\newnumbered{algorithm}{Algorithm}
\newnumbered{example}{Example}
\newunnumbered{notation}{Notation}
\newunnumbered{theoremp}{Theorem}



\newcommand{\rr}{\mathds{R}}

\newcommand{\Z}{\mathbb{Z}}
\newcommand{\F}{\mathbb{F}}

\newcommand{\K}{{\mathcal K}}
\newcommand{\X}{{\mathcal X}}

\def\rr{\mathds{R}}

\def\conf{\operatorname{Conf}}

\def\ind{\operatorname{Index}}

\newcommand\Sym{\mathfrak S}



\title{Thieves can make sandwiches}

\author{Pavle V. M. Blagojevi\'{c} and Pablo Sober\'on}

\dedication{Dedicated to Imre B\'ar\'any on the occasion of his 70th birthday}

\classno{52A99 , 52A37, 55S91 (primary), 05A18 (secondary).
 Please refer to {\tt http://www.ams.org/msc/} for a list of codes}

\extraline{Pavle V. M. Blagojevi\'{c} received funding from DFG via the Collaborative Research Center TRR~109 ``Discretization in Geometry and Dynamics'', and the grant ON 174008 of the Serbian Ministry of Education and Science.
This material is based upon work supported by the National Science Foundation under Grant No. DMS-1440140 while Pavle V. M. Blagojevi\'{c} was in residence at the Mathematical Sciences Research Institute in Berkeley, California, during the Fall of 2017.}

\begin{document}
\maketitle

\begin{abstract}
We prove a common generalization of the Ham Sandwich theorem and Alon's Necklace Splitting theorem.  Our main results show the existence of fair distributions of $m$ measures in $\rr^d$ among $r$ thieves using roughly $mr/d$ convex pieces, even in the cases when $m$ is larger than the dimension.  
	The main proof relies on a construction of a geometric realization of the topological join of two spaces of partitions of $\rr^d$ into convex parts, and the computation of the Fadell-Husseini ideal valued index of the resulting spaces.
\end{abstract}


\section{Introduction}

Measure partition problems are classical, significant and challenging questions of Discrete Geometry.
Typically easy to state, they hide a connection to various advanced methods of Algebraic Topology.  
In the usual setting, we are presented with a family of measures in a geometric space and a set of rules to partition the space into subsets, and we are asked if there is a partition of this type which splits each measure evenly.

\medskip
In this paper we consider convex partitions of the Euclidean space $\rr^d$.
More precisely, an ordered collection of $n$ closed subsets $\K=(K_1, \ldots, K_n)$ of $\rr^d$ is a {\em partition of $\rr^d$} if it is a covering $\rr^d=K_1\cup\cdots\cup K_n$, all the interiors $\mathrm{int}(K_1),\ldots,\mathrm{int}(K_n)$ are non-empty, and $\mathrm{int}(K_i)\cap \mathrm{int}(K_j)=\emptyset$ for all $1\leq i< j\leq n$.
A partition $\K=(K_1, \ldots, K_n)$ is a  {\em convex partition of $\rr^d$} if each element $K_i$ of the partition is convex.
Furthermore, for an integer $r\geq 1$ an {\em $r$-labeled (convex) partition of $\rr^d$} is an ordered pair $(\K,\ell)$ where $\K=(K_1, \ldots, K_n)$ is a (convex) partition of $\rr^d$, and $\ell\colon [n]\longrightarrow [r]$ is an arbitrary function.
We use the notation $[n]:=\{1,\ldots,n\}$.

\medskip
Throughout this paper all measures in $\rr^d$ are assumed to be probability measures that are absolutely continuous with respect to the Lebesgue measure.  
This in particular means that the overlapping boundary $\bigcup_{1\leq i<j\leq n} K_i\cap K_j$ of the elements of a partition always has measure zero.  

\medskip
The quintessential measure partitioning result is the Ham Sandwich theorem, which was conjectured by Steinhaus and proved subsequently by Banach in 1938 (consult for example \cite{BeyerZardecki}).
The Ham Sandwich theorem is one of the most widely known consequences of the Borsuk--Ulam theorem.

\begin{theoremp}[(Ham Sandwich theorem)]
Let $d\geq1$ be an integer.
For any collection of $d$ measures $\mu_1,\ldots,\mu_d$ in $\rr^d$, there exists an affine hyperplane $H$ that simultaneously splits them into halves.  Namely, we have
\[
\mu_i(H^+)=\mu_i(H^-)
\]
for all $1\leq i\leq d$, where $H^+$ and $H^-$ denote closed half-spaces determined by $H$.
\end{theoremp}

The reason for its name is an illustration where each of the measures is thought of as a different ingredient floating in $\rr^d$.  
The goal is to make two sandwiches with equal amount of each ingredient by cutting $\rr^d$ with a single hyperplane slice.  
Furthermore, if more people want their sandwich and they all fancy convex shapes, it has been shown by different groups of authors \cite{Soberon:2012kp} \cite{Karasev:2013gi} \cite{Blagojevic:2014ey} that \textit{for any collection of $d$ measures $\mu_1,\ldots,\mu_d$ in $\rr^d$ and any integer  $r\geq 1$ there is a convex partition $\K=(K_1, \ldots, K_r)$ of $\rr^d$ into $r$ parts that simultaneously split each measure into $r$ parts of equal size.  Namely,
\[
\mu_i(K_1)=\cdots=\mu_i(K_r)
\]
for all $1\leq i\leq d$. 
}

\medskip
The second classic and appealing measure partition result, and again a consequence of the Borsuk--Ulam theorem, is the following ``Necklace Splitting'' theorem of Hobby and Rice \cite{Hobby:1965bh}.

\begin{theoremp}[(Necklace Splitting theorem)]
Let $m\geq 1$ be an integer.
Then for any collection of $m$ measures $\mu_1, \ldots, \mu_m$ in $\rr$, there exists a $2$-labeled convex partition $(\K,\ell)$ of $\rr$ into a collection of $m+1$ intervals $\K=(K_1,\ldots, K_{m+1})$ with $\ell\colon [m+1]\longrightarrow [2]$ such that for every $1\leq j\leq m$:
\[
\mu_j\big(\bigcup_{i\in\ell^{-1}(1)}K_i\big)= \mu_j\big(\bigcup_{i\in\ell^{-1}(2)}K_i\big)=\tfrac{1}{2}.
\]
\end{theoremp}

The discrete version of this theorem, where the measures are the counting measures of finite sets of points, was first proved by Goldberg and West \cite{Goldberg:1985jr}, and then by Alon and West \cite{Alon:1985cy}. 
The common illustration of this theorem is as follows.  
Two thieves steal an open necklace with $m$ types of pearls, knowing that there is an even number of pearls of each kind.  
They will cut the necklace into pieces and distribute those among themselves, so that each receives half of each kind of pearl.  
The result above shows that this can always be achieved with $m$ cuts, regardless of the order of the pearls.   The function $\ell$ is simply telling us who gets each part. 
The version with arbitrary number $r$ of thieves was given by Alon in \cite{Alon:1987ur}, where $(r-1)m$ cuts are shown to be sufficient.  The number of cuts cannot be improved.  Extensions of this result with additional combinatorial conditions on the distribution of the necklace appear in \cite{asada2017fair}.

\medskip
Our main goal in this manuscript is to present a common generalization of the Ham Sandwich theorem and the Necklace Splitting theorem.  
In other words, given more than $d$ ingredients in $\rr^d$, we should be able to find a fair distribution among $r$ hungry persons if we are willing to split $\rr^d$ into more than $r$ parts.  
Alternatively, if $r$ thieves steal a high-dimensional necklace, they should be able to distribute it among themselves by splitting it into very few convex parts.

\medskip
High-dimensional versions of the Necklace Splitting theorem were given by de Longueville and \v{Z}ivaljevi\'c \cite{deLongueville:2006uo}, and by Karasev, Rold\'an-Pensado and Sober\'on \cite{Karasev:2016vd}. 
In \cite{deLongueville:2006uo}, the authors proved an analogous result for $r$ thieves and $m$ measures in $\rr^d$, where the partitions are made using $(r-1)m$ hyperplanes each of whose directions is fixed in advance and must be orthogonal to a vector of the canonical basis of $\rr^d$.  
The downside of this type of partitions is that there may be an extremely large number of pieces to distribute.

\begin{figure}[ht]
\centerline{\includegraphics[scale=0.8]{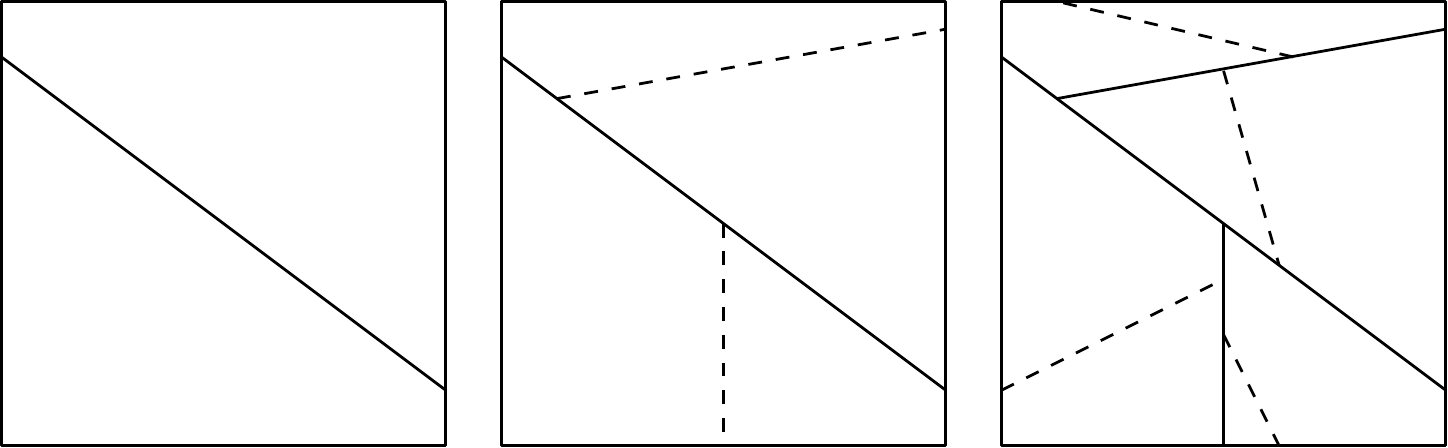}}
\caption{An iterated partition of the plane by successive hyperplane cuts}
\label{figure-1}
\end{figure}

\medskip
One way to address this issue is to consider iterated hyperplane partitions.  
A partition of $\rr^d$ into convex parts is an \textit{iterated hyperplane partition} if it can be made out of $\rr^d$ by successively partitioning each convex part, from a previous partition, with a hyperplane (that means each new hyperplane only cuts one of the existing convex parts, see Figure \ref{figure-1}).  
In \cite{Karasev:2016vd} the authors showed that for $r$ thieves and $m$ measures in $\rr^d$, there is a fair distribution of each measure among the thieves using an iterated hyperplane partition that has $(r-1)m$ hyperplane cuts, whose directions are fixed in advance, as long as $r$ is a prime power.  
This has the advantage that the total number of parts is $(r-1)m+1$.

\medskip
In both results, there is little to gain from the increasing dimension. 
This is a consequence of fixing the directions of the cutting hyperplanes.
Thus, it is natural to wonder what can be gained if the fixed directions restriction is disregarded.  In this situation we distinguish three different types of labeled partitions.  
The first type of partitions are labeled partitions of $\rr^d$ into $n$ convex parts without any additional requirements.  
For the second type of partitions we consider iterated convex partitions of $\rr^d$ that in the case when $r=2$ coincide with iterated hyperplane partitions.

Intuitively, these are formed by splitting $\rr^d$ using iterated hyperplane cuts, and then splitting each remaining region with a power diagram into $r$ parts.  Because of its complexity, the formal definitions of power diagrams and iterated convex partitions are postponed for the next section, Definition \ref{def : space of iterated partitions}.  Finally, the third type of partitions are those made by iterated hyperplane cuts. 

\begin{definition}
\label{def-M'}
Let $n\geq 1$, $r\geq 1$ and $d\geq 1$ be integers.
\begin{compactenum}[\rm (1)]
\item 	Let $M=M(n,r,d)$ be the largest integer such that for any collection of $M$ measures $\mu_1, \ldots, \mu_M$ in $\rr^d$ there exists an $r$-labeled convex partition $(\K,\ell)$ of $\rr^d$ into $n$ parts $\K=(K_1,\ldots, K_{n})$ with $\ell\colon [n]\longrightarrow [r]$ having the property that for all $1\leq j\leq M$ and all $1\leq s\leq r$:
\[
\mu_j\big(\bigcup_{i\in\ell^{-1}(s)}K_i\big)=\tfrac{1}{r}.
\] 
Every such labeled convex partition into $n$ parts is called a {\em fair distribution} between the thieves.

\item If $n$ is a multiple of $r$, let $M'=M'(n,r,d)$ be the largest integer such that for any collection of $M'$ measures $\mu_1, \ldots, \mu_{M'}$ in $\rr^d$ there exists an $r$-labeled iterated  partition $(\K,\ell)$ of $\rr^d$ into $n$ convex parts $\K=(K_1,\ldots, K_{n})$ with $\ell\colon [n]\longrightarrow [r]$ so that for all $1\leq j\leq M'$ and all $1\leq s\leq r$:
\[
\mu_j\big(\bigcup_{i\in\ell^{-1}(s)}K_i\big)=\tfrac{1}{r}.
\]
Every such $r$-labeled convex partition into $n$ parts is called a {\em fair iterated distribution} between the thieves.

\item Let $M''=M''(n,r,d)$ be the largest integer such that for any collection of $M''$ measures $\mu_1, \ldots, \mu_{M''}$ in $\rr^d$ there exists an $r$-labeled convex partition $(\K,\ell)$ of $\rr^d$ into $n$ parts $\K=(K_1,\ldots, K_{n})$ formed only by iterated hyperplane cuts with $\ell\colon [n]\longrightarrow [r]$ so that for all $1\leq j\leq M''$ and all $1\leq s\leq r$:
\[
\mu_j\big(\bigcup_{i\in\ell^{-1}(s)}K_i\big)=\tfrac{1}{r}.
\]
Every such $r$-labeled convex partition into $n$ parts is called a fair iterated distribution between the thieves by hyperplane cuts.
\end{compactenum}

\end{definition}

Some of the convex parts $K_i$ in the partition $\K$ can be empty.
For an example of a $2$-labeled convex partition formed by iterated hyperplane cuts see Figure \ref{figure-2}.

\begin{figure}[ht]
\centerline{\includegraphics[scale=1.2]{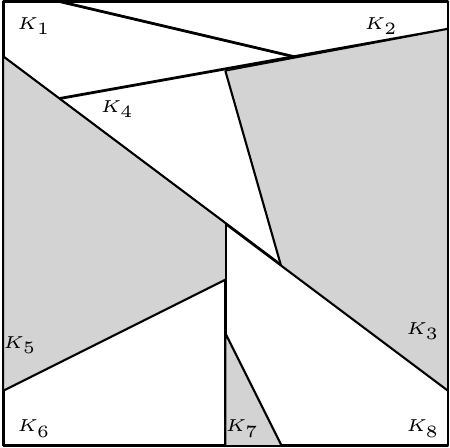}}
\caption{Convex $2$-labeled partition of $\rr^2$ by iterated hyperplane cuts with $\ell={{12345678}\choose {11212121}}$.}
\label{figure-2}
\end{figure}

For all integers $n\geq 1$, $r\geq 1$ and $d\geq 1$ it is clear that $ M'(n,r,d)\leq M(n,r,d)$ and $M''(n,r,d) \le M(n,r,d)$.  
The Ham Sandwich theorem is equivalent to the statement that $M''(2,2,d) = M'(2,2,d) = M(2,2,d) = d$, while the Necklace Splitting theorem for two thieves is equivalent to $M''(n+1,2,1) = M(n+1,2,1) = n$.  
The respective extensions of the Ham Sandwich and Necklace splitting theorems for distributions among more persons simply state that $M(r,r,d) = d$ and $M(n,r,1) = \lfloor \frac{n-1}{r-1}\rfloor$.

\medskip
In this paper we prove the following bound on the function $M'(n,r,d)$.

\begin{theorem}\label{theorem-main}
Let $d\geq1$ and $t\geq1$ be integers, and let $r\geq 2$ be a prime. 
Then,
\[
M'(rt,r,d)\geq \lceil \tfrac{td(r-1)+t}{r-1}-1 \rceil.
\] 
Moreover, this result is optimal for $M'(r,r,d)$ and $M'(n,2,1)$.
\end{theorem}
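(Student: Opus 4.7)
The plan is to apply the configuration-space/test-map paradigm. Set $m := \lceil td + \tfrac{t}{r-1} - 1 \rceil$, fix $m$ absolutely continuous probability measures $\mu_1,\dots,\mu_m$ on $\rr^d$, and introduce a configuration space $X = X(t,r,d)$ parametrising all $r$-labeled iterated convex partitions of $\rr^d$ into $rt$ parts of the admissible form. The space $X$ carries a natural action of a finite group $G$ which, for $r$ prime, contains the elementary abelian subgroup $(\Z/r)^{t}$ cyclically permuting the $r$ labels inside each of the $t$ top-level regions independently. The test map $\Phi\colon X\to V$ sends a labeled partition to the vector of measure discrepancies $\bigl(\mu_j(\bigcup_{i\in\ell^{-1}(s)}K_i) - \tfrac{1}{r}\bigr)_{j,s}$, regarded as an element of a reduced real $G$-representation $V$ of real dimension $m(r-1)$; a fair iterated distribution exists precisely when $\Phi$ has a zero.

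The first step is to build, as suggested by the abstract, a geometric model exhibiting $X$ as a topological join $X \simeq Y \ast Z$. Here $Y$ is the space of iterated hyperplane partitions of $\rr^d$ into $t$ convex regions (using $t-1$ cuts), and $Z$ is the $(\Z/r)^t$-equivariant product of $t$ copies of the space of generalized power-diagram partitions of a convex region into $r$ labeled parts. The join decomposition has to be arranged to be $G$-equivariant, and the global test map $\Phi$ must assemble as the corresponding join of natural partial test maps on the two factors.

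The second and main technical step is the computation of the Fadell--Husseini ideal-valued index $\ind_G(X)$ with coefficients in $\F_r$. The join structure $X \simeq Y \ast Z$ reduces this to controlling the indices of the two factors separately and then combining them via the standard product relation $\ind_G(Y \ast Z) \supseteq \ind_G(Y)\cdot\ind_G(Z)$. For $r$ prime, restricting the analysis to $(\Z/r)^t$ turns the equivariant cohomology into a polynomial ring, and each factor index becomes a principal ideal generated by an explicit product of Euler classes: each of the $t-1$ hyperplane cuts contributes a class of low degree, while each of the $t$ power-diagram factors contributes a class of degree $d(r-1)$. Comparing the resulting description of $\ind_G(X)$ with the Euler class $e(V) \in H^{m(r-1)}(BG;\F_r)$ of the test representation yields the desired obstruction to a fixed-point-free equivariant map $X \to S(V)$ precisely when $m \le \lceil td + \tfrac{t}{r-1} - 1 \rceil$, forcing $\Phi$ to vanish somewhere; the ceiling arises from the unit shift contributed by the join on top of the additive contributions of the two factors.

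The main obstacle I anticipate is the rigorous construction of the join model $X \simeq Y \ast Z$ together with the verification that the assembled test map $\Phi$ extends continuously and equivariantly across the join strata where one of the two partitioning processes degenerates; the normalisation conventions gluing the hyperplane partition with the power diagrams must be chosen so that no spurious zeros of $\Phi$ appear on the degenerate strata. Once these points are secured, the optimality statements follow from familiar extremal examples: $M'(r,r,d)\le d$ from the standard Ham Sandwich obstruction applied to $d+1$ measures in general position, and $M'(n,2,1)\le n-1$ from the sharpness of the one-dimensional necklace-splitting theorem.
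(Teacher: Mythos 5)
Your overall architecture (configuration space of iterated labeled partitions, test map into a sum of reduced permutation representations, Fadell--Husseini index) matches the paper, but two of your key choices would break the argument. First, the symmetry group: the test map $\Phi$ is \emph{not} equivariant for a $(\Z/r)^t$ acting by permuting the labels inside each of the $t$ blocks independently. A thief's share is the sum of contributions from all $t$ blocks, so relabeling within a single block changes the discrepancy vector in a way that is not a coordinate permutation of the target; only the \emph{diagonal} $\Sym_r$-action (the same permutation applied to every block) makes $\Phi$ equivariant, and the paper accordingly works with the diagonal cyclic subgroup $\Z/r\leq\Sym_r$. Relatedly, the correct model for the configuration space is the $t$-fold join $\conf(\rr^{d+1},r)^{*t}$ with this diagonal action --- the hyperplane cuts are exactly the join coordinates $a\in\rr\cup\{\pm\infty\}$ --- not a join of a separate ``hyperplane-partition space'' with a \emph{product} of power-diagram spaces; indices of products do not decompose the way you need.

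Second, and more seriously, your index computation points in the wrong direction. To rule out a $G$-map $X\to S(V)$ you must show $\ind_G(X)\not\supseteq\ind_G(S(V))$, i.e.\ you need an \emph{upper} bound on $\ind_G(X)$ (that classes in degrees below $t(d(r-1)+1)$ are \emph{not} in the index). The join relation $\ind_G(Y*Z)\supseteq\ind_G(Y)\cdot\ind_G(Z)$ only enlarges the index and so proves nothing here. Moreover $\ind_{\Z/r}(\conf(\rr^{d+1},r))$ is not a principal ideal generated by an Euler class; it equals the full ideal $H^{\geq d(r-1)+1}(\Z/r;\F_r)$, and establishing the analogous upper bound for the $t$-fold join is the technical heart of the proof. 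The paper does this via the Serre spectral sequence of the Borel construction, using Cohen's computation that $\widetilde H^q(\conf(\rr^{d+1},r);\F_r)$ is a free $\F_r[\Z/r]$-module except in the top degree $q=d(r-1)$, whence (by K\"unneth for joins) all differentials out of the bottom row vanish below degree $t(d(r-1)+1)$; the matching lower bound comes from a $d(r-1)$-dimensional $\Sym_r$-CW model of the configuration space. Your proposal contains no substitute for this freeness input, so the obstruction does not follow as written. (Your optimality remarks for $M'(r,r,d)$ and $M'(n,2,1)$ are fine in spirit and match the known sharp examples.)
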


The labeled partitions we use to prove Theorem \ref{theorem-main} have additional property: 
From the $rt$ convex parts, every thief receives exactly $t$ of them.  
The result above implies that $M(r,r,d) = d$ for any $r$ using a standard factorization argument.  This factorization argument only works well if $t=1$ or $d=1$.

\medskip
For the case $r=2$, our results actually give iterated hyperplane partitions.  Since we also include results for the case when we use an odd number of parts, we state them separately.

\begin{theorem}\label{theorem-two-thieves}
Let $d\geq1$ be an integer.  
Then,
\[
\begin{array}{llll}
M''(2t,2,d)   &\geq  &	t(d+1)-1,  & \mbox{for }t\ge 1\\
M''(2t+1,2,d) &\geq  &   t(d+1) ,   & \mbox{for }t\ge 0.
\end{array}
\]
Moreover, this result is optimal for $M'(n,2,1), M'(3,2,d)$ and $M'(2,2,d)$.	
\end{theorem}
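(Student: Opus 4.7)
The plan is to derive Theorem~\ref{theorem-two-thieves} from the topological machinery established for Theorem~\ref{theorem-main}, exploiting the simplifications that arise when $r=2$.

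For the even-parts inequality $M''(2t,2,d) \geq t(d+1)-1$, the first step is the observation that a power diagram on two sites is nothing but the decomposition by their perpendicular bisector. Hence, when $r=2$, the iterated convex partitions appearing in Definition~\ref{def : space of iterated partitions} coincide with iterated hyperplane partitions: every inner $r$-partition step is a single hyperplane cut. Consequently, the partitions produced by Theorem~\ref{theorem-main} with $r=2$ are automatically iterated hyperplane partitions, so the inequality $M'(2t,2,d)\geq \lceil td+t-1\rceil = t(d+1)-1$ transfers verbatim to $M''(2t,2,d) \geq t(d+1)-1$.

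For the odd-parts inequality $M''(2t+1,2,d) \geq t(d+1)$, I would augment the configuration space underlying the even case by a preliminary oriented hyperplane $H$ splitting $\rr^d$ into $H^+$ and $H^-$, which provides a single extra degree of freedom. The piece $H^-$ is declared to be the $(2t+1)$-st part and carries a fixed label; inside $H^+$ we place an iterated hyperplane partition into $2t$ parts together with a $\Z/2$-labeling as in the even case. After subtracting the contribution of $H^-$ from thief 2's share, the condition that each thief gets exactly half of every measure becomes, on the $H^+$ side, an equivariant system of the same shape as in the even case but with shifted targets. The orientation-reversal of $H$ realises the relevant $\Z/2$-action, and the extra parameter in $H$ absorbs exactly one additional measure, upgrading the count from $t(d+1)-1$ to $t(d+1)$.

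The main obstacle will be the equivariant topology of the augmented space: one must verify that the Fadell--Husseini index picks up the added $S^d$ factor of oriented hyperplanes in a way compatible with the index computation that powers Theorem~\ref{theorem-main}, which should amount to a join-with-$S^d$ argument together with a controlled analysis of the boundary behaviour as $H^-$ degenerates (so that the limit recovers a subspace of the even configuration space without killing the relevant cohomology class). Once this is in place, the optimality claims follow from classical examples: tightness for $M'(n,2,1)$ from the Hobby--Rice lower bounds, for $M'(2,2,d)$ from the ham sandwich sharpness via $d$ mass-point measures in general position, and for $M'(3,2,d)$ by placing $d+2$ measures so concentrated near points in convex position that no $2$-labelled $3$-part convex partition can fairly split them.
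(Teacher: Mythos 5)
Your even-parts argument is essentially the paper's: for $r=2$ a two-site power diagram is a single hyperplane cut, so the $t$-iterated $2$-labeled partitions are iterated hyperplane partitions, the configuration space is $\conf(\rr^{d+1},2)^{*t}\simeq (S^d)^{*t}\cong S^{t(d+1)-1}$ with a free antipodal action, $W_2=\rr$, and the non-existence of a $\Z/2$-map $S^{t(d+1)-1}\to S^{t(d+1)-2}$ is exactly the Borsuk--Ulam theorem (the $r=2$ specialization of the index computation behind Theorem~\ref{theorem-main}). That part is fine.

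The odd-parts case contains a genuine gap. You introduce a \emph{full} oriented hyperplane $H$ as the extra piece and defer the crucial step to ``a join-with-$S^d$ argument together with a controlled analysis of the boundary behaviour as $H^-$ degenerates,'' which you yourself flag as the main obstacle and never resolve. This step does not go through as described: an oriented hyperplane carries $d$ parameters, not ``a single extra degree of freedom,'' and the resulting partition space is \emph{not} a join with $S^d$ --- when $H$ drifts to infinity the direction information collapses, and if the space really were $S^d * S^{t(d+1)-1}\cong S^{t(d+1)+d}$ as a free $\Z/2$-sphere you would conclude $M''(3,2,d)\geq 2d+1$ for $t=1$, contradicting the upper bound $M(3,2,d)\leq d+1$ of Claim~\ref{claim-01}. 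The paper's construction is both simpler and correct: \emph{fix} the direction $v$ of the extra cut, let only the offset $a\in\rr\cup\{\pm\infty\}$ vary, and take the extra block to be the one-set partition $\mathcal{Y}$ equipped with a label in $[2]$, so that $\mathcal{Y}\cong S^0$ and the $\Z/2$-action swapping the thieves acts on this label (this, not orientation-reversal of a varying $H$, is what restores equivariance). The space of partitions is then the genuine join $\mathcal{Y}*\X\cong S^0 * S^{t(d+1)-1}\cong S^{t(d+1)}$, the sphere dimension rises by exactly one, and Borsuk--Ulam yields exactly one additional measure. Your optimality sketch is broadly in line with the paper; for $M'(3,2,d)$ the paper's witness is $d+1$ measures at the vertices of a simplex together with one in its interior (so not all in convex position), which forces the thief receiving a single piece to take the entire interior measure.
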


For $r=2$ Theorem \ref{theorem-two-thieves} says that $M'(n,2,d) \sim \left\lfloor\frac{n}{2}\right\rfloor(d+1)$, which can be seen as a common extension of the Ham Sandwich theorem and the Necklace Splitting theorem clearer.  
For larger values of $r$, the lower bounds we obtain for $M(n,d,r)$ are roughly $\frac{nd}{r}$.

\medskip
The rest of the paper is organized as follows.  
The configuration spaces of the partitions we use for the proof of Theorems \ref{theorem-main} and \ref{theorem-two-thieves} are introduced in Section \ref{section-spaces}. 
The lower bounds for Theorems \ref{theorem-main} and \ref{theorem-two-thieves} are established in Section \ref{section-proofs}.  
All our upper bounds for $M(n,r,d)$ and $M''(n,r,d)$ are showed in Section \ref{section-upperbounds}. 
The paper is conclude with a fewopen questions and remarks in Section \ref{section-remarks}.

\section{Configuration spaces of labeled convex partitions}

\label{section-spaces}

The proof of our main results relies on finding a well-behaved configuration space of iterated partitions.
This allows us to apply the configuration space / test map scheme and reduce the problem to the question about the non-existence of a particular equivariant map; for a classical introduction to this method see \cite{matousek2003using}.
In this section we introduce the relevant configuration spaces.
 
\medskip
First we recall the notion of oriented affine hyperplane including the hyperplanes at infinity.
Let $v\in S^{d-1}$ be a unit vector in $\rr^d$ and $a\in\rr\cup\{-\infty,+\infty\}$.
An oriented affine hyperplane determined by the pair $(v,a)$ is the pair $(H^+_{(v,a)},H^-_{(v,a)})$ of closed subsets of $\rr^d$ defined by:
\[
H^+_{(v,a)}:=
{
\begin{cases}
	\{ x\in\rr^d : \langle x,v\rangle\geq a\}, & a\in\rr,\\
	\emptyset, & a=+\infty,\\ 
	\rr^d, &  a=-\infty,\\
\end{cases}}
\quad\text{and}\quad
H^-_{(v,a)}:=
{
\begin{cases}
	\{ x\in\rr^d : \langle x,v\rangle\leq a\}, & a\in\rr,\\
	\rr^d, &  a=+\infty,\\ 
	\emptyset, & a=-\infty,\\
\end{cases}}
\]
For an example of an oriented affine hyperplane in a plane see Figure \ref{figure-3}.

\begin{figure}
\centerline{\includegraphics[scale=0.7]{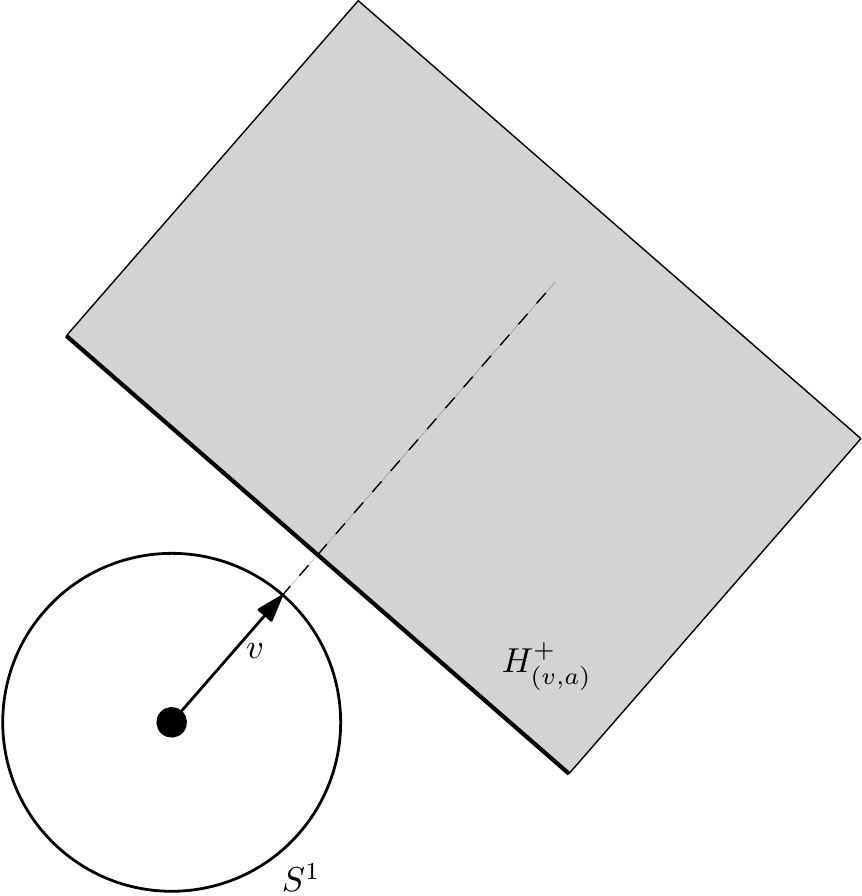}}
\caption{An illustration of an oriented affine hyperplane in the plane.}
\label{figure-3}
\end{figure}

\medskip
Next we introduce a family of binary operations on $r$-labeled convex partitions of $\rr^d$ that are indexed by oriented affine hyperplanes in $\rr^d$.

\begin{definition}
	Let $d\geq1$, $n\geq1$, $m\geq1$  and $r\geq1$ be integers, let $v\in S^{d-1}$ be a unit vector in $\rr^d$, and let $a\in\rr\cup\{-\infty,+\infty\}$.
	For any two $r$-labeled convex partitions 
\[
(\K,\ell)=((K_1, \ldots, K_n),[n]\longrightarrow [r]) 
\qquad\text{and}\qquad  
(\K',\ell')=((K_1', \ldots, K_m'),[m]\longrightarrow [r])
\]
and the oriented affine hyperplane $(H^+_{(v,a)},H^-_{(v,a)})$ we define the $r$-labeled convex partition $(\K'',\ell'')$ to be
\[
\K'':=(H^+_{(v,a)}\cap K_1,\ \ldots,H^+_{(v,a)}\cap K_n, \ H^-_{(v,a)}\cap K_1', \ \ldots, \ H^-_{(v,a)}\cap K_n')
\]
with $\ell''\colon [n+m]\longrightarrow [r]$ given by
\[
\ell''(k):=
\begin{cases}
	\ell(k), & 1\leq k\leq n,\\
	\ell'(k-n)& n+1\leq k\leq n+m.
\end{cases}
\]
\end{definition}

\noindent
We denote by $\K *_{(v,a)}\K'$ the labeled partition $\K''$ that was just defined, for an illustration see Figure \ref{figure-4}.

\begin{figure}
\centerline{\includegraphics[scale=0.8]{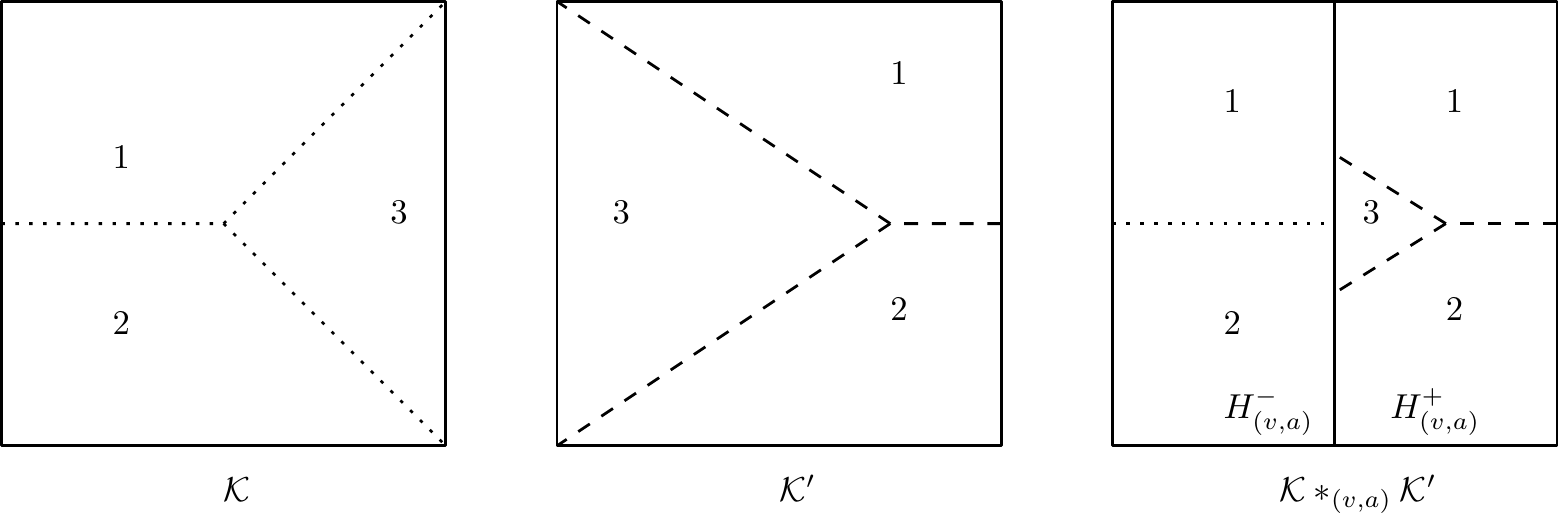}}
\caption{\small An illustration of of the operation $\K *_{(v,a)}\K'$.}
\label{figure-4}
\end{figure}

\medskip
The initial building block is the space of convex partitions of $\rr^d$ we use are power diagrams.  These are generalizations of Voronoi diagrams, defined below.  For a recent example of these partitions in the context of measure partitions see the work of Karasev \cite{Karasev2010}, and for a general survey see \cite{aurenhammer1991voronoi}.

\begin{definition}
	Let $d\geq1$ and $r\geq1$ be integers, and let $A_d$ be the space of all affine functionals $\rr^d\longrightarrow\rr$. 
	To every ordered $n$-tuple $(a_1,\ldots,a_r)$ of pairwise distinct elements of $A_d$ we associate the labeled convex partition $\K_{(a_1,\ldots,a_r)}:=((K_1,\ldots,K_r),\ell)$ where
	\[
	K_i:=\{x\in\rr^d : a_i(x)=\max\big\{a_1(x),\ldots,a_r(x)\}\big\},
	\]
	for all $1\leq i\leq r$, and $\ell\colon [r]\longrightarrow [r]$ is the identity map.
	The space $\X_{1,r}$ of all convex partitions $\K_{(a_1,\ldots,a_r)}$ parametrized by collections of $r$ pairwise distinct elements of $A_d$ can be identified with the classical configuration space $\conf(\rr^{d+1},r)$ of all $r$  pairwise distinct points in $\rr^{d+1}$, that is
	\[
	\X_{1,r}\cong \conf(\rr^{d+1},r):=\{(x_1,\ldots,x_r)\in (\rr^{d+1})^r : x_i\neq x_j \text{ for }i\neq j\}.
	\]
	Furthermore, the symmetric group $\Sym_r$ acts from the left on the space of partitions $\X_{1,r}$ as follows
	\[
	\pi\cdot ((K_1,\ldots,K_r),\ell) = ((K_{\pi(1)},\ldots,K_{\pi(r)}),\pi^{-1}\circ\ell),
	\]
	where $\pi\in\Sym_r$, and $\ell$ is the identity map.
\end{definition}

In the case $r=2$ notice that we are dealing with partitions determined by a single hyperplane; that means $\conf(\rr^{d+1},2) \simeq S^d$, the $d$-dimensional sphere in $\rr^{d+1}$.

\medskip
Now we use the operations $*_{(v,a)}$ between labeled partitions to introduce spaces of iterated labeled convex partitions for $r$ thieves starting with the space $\X_{1,r}$.

\begin{definition}
	\label{def : space of iterated partitions}
	Let $d\geq1$, $r\geq1$ and $t\geq 1$ be integers.
	A topological space $\X$ is a space of $t$-iterated $r$-labeled convex partitions if
	\begin{compactenum}[\rm (1)]
	\item $\X=\X_{1,r}$ when $t=1$, or if
	\item there exists $\X'$ a space of $t'$ iterated $r$-labeled convex partitions, $\X''$ a space of $t''$ iterated $r$-labeled convex partitions, and  unit vector $v\in S^{d-1}$ such that $t=t'+t''$ and
	\[
	\X=\{\K'*_{(v,a)}\K'' : \K'\in\X',\K''\in\X'', a\in\rr\cup\{-\infty,+\infty\}\}.
	\]
	\end{compactenum}
	An $r$-labeled convex partition $(\K,\ell)$ is an iterated $r$-labeled convex partition if for some integer $t\geq1$ it belongs to a space of $t$-iterated $r$-labeled convex partitions.
		\end{definition}
		
\begin{figure}
\centerline{\includegraphics[scale=0.7]{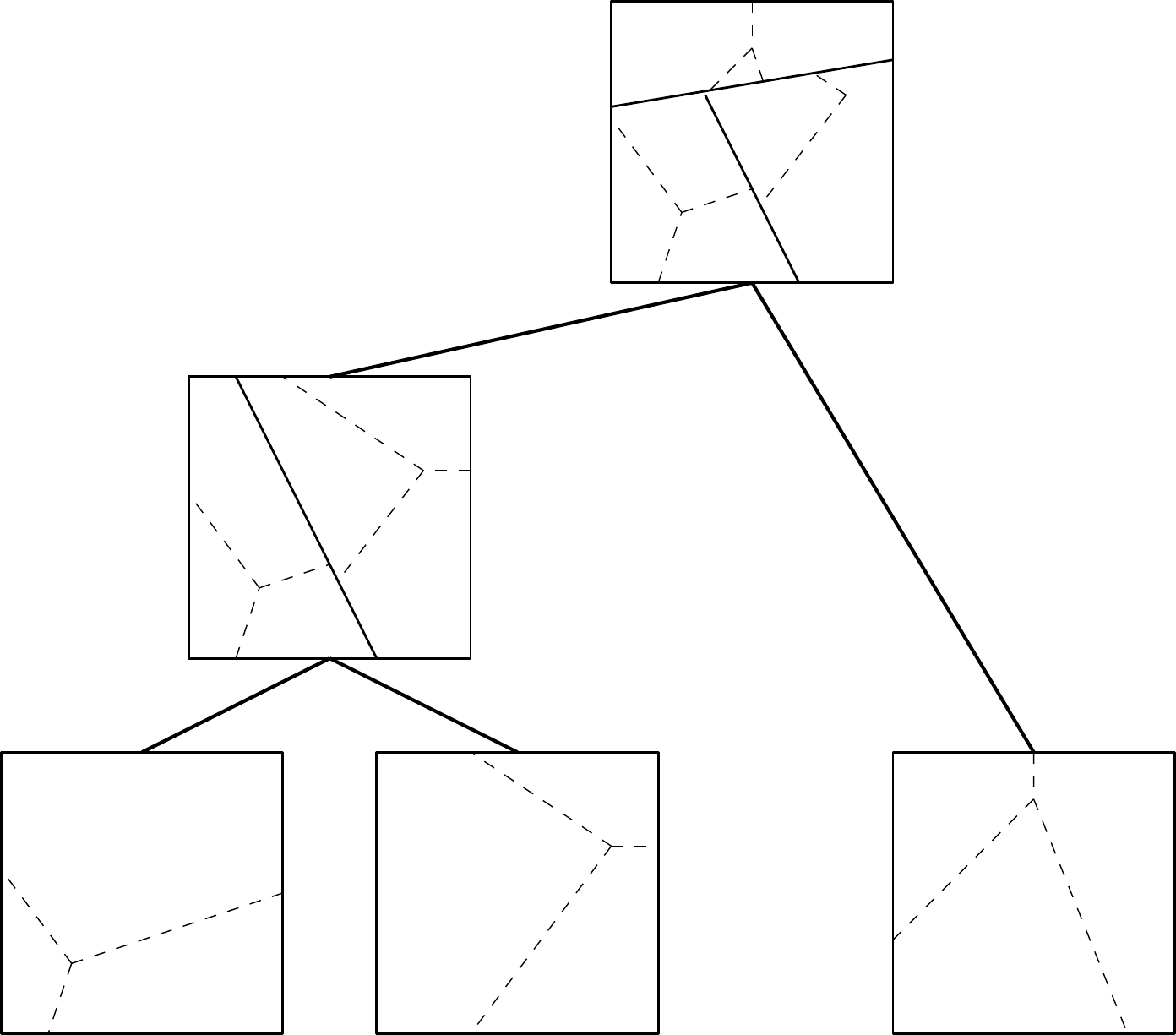}}
\caption{\small An example of a construction of a $3$-iterated partition from a binary tree.  Each lower block with dotted lines represents a partition given by $\conf(\rr^2,3)$.  The continuous lines are the hyperplanes with fixed directions used to combine the lower blocks.  In the general construction, we can prescribe the partition tree and the direction of the hyperplanes involved in the construction.}
\label{figure-tree}
\end{figure}		
 
An intuitive way to think about the partitions is that we are iteratively making hyperplane cuts to existing regions, and then we cut every remaining set with a power diagram.  
For an illustration of how an iterated partition is build from a binary three and a choice of direction vectors see Figure \ref{figure-tree}.
According to the definition of the operations $*_{(v,a)}$ it follows that for any given space $\X$ of $t$-iterated $r$-labeled partitions there is a natural surjective map from the iterated joins $\conf(\rr^{d+1},r)^{* t}$ onto $\X$.
If $\X$ is a space of $t$-iterated $r$-labeled convex partitions then every partition $(\K,\ell)\in\X$, where $\ell\colon [tr]\longrightarrow [r]$, has at most $tr$ non-empty convex pieces. 
Furthermore, any space of $t$-iterated $r$-labeled convex partitions has the left diagonal $\Sym_r$ action induced from the action on $\X_{1,r}$.

\section{Proof of Theorem \ref{theorem-main} and Theorem \ref{theorem-two-thieves}}\label{section-proofs}
Let $d\geq1$, $t\geq1$ and $M\geq1$ be integers, let $r\geq 2$ be a prime, and let  $\mu_1, \ldots, \mu_M$ be measures in $\rr^d$.
Choose an arbitrary $t$-iterated $r$-labeled space of convex partitions $\X$.
As we have seen $\X$ can be identified with the $t$-fold join $\conf(\rr^{d+1},r)^{*t}$ of the classical configuration space.
A typical element $(\K,\ell)$ of $\X$ is a $t$-iterated $r$-labeled convex partition where $\ell\colon [rt]\longrightarrow [r]$.
Consider the continuous map $\Phi\colon \X \longrightarrow W_r^{\oplus M}$ defined by
\begin{multline*}
(\K,\ell)=((K_1,\ldots,K_{rt}),\ell\colon [rt]\longrightarrow [r])\longmapsto  \\
\left(
\sum_{j\in\ell^{-1}(1)}	\mu_i(K_j)-\tfrac1r\Big(\sum_{j=1}^{rt}\mu_i(K_j)\Big)
,\ldots,
\sum_{j\in\ell^{-1}(r)}	\mu_i(K_j)-\tfrac1r\Big(\sum_{j=1}^{rt}\mu_i(K_j)\Big)
\right)_{i=1,\ldots,M},
\end{multline*}
where $W_r:=\{(y_1,\ldots,y_r)\in\rr^r : x_1+\cdots+x_r=0\}$.
The space of $t$-iterated $r$-labeled convex partitions $\X$ is equipped with an action of the symmetric group $\Sym_r$.
The vector space $W_r\subseteq\rr^r$ can be seen as a real $\Sym_r$-representation via the action given by $\pi\cdot (y_1,\ldots,y_r)=(y_{\pi^{-1}(1)},\ldots,y_{\pi^{-1}(r)})$ for $\pi\in\Sym_r$ and $(y_1,\ldots,y_r)\in W_r$.
Then the direct power $W_r^{\oplus M}$ is endowed with the diagonal $\Sym_r$-action.
With the introduced actions the map $\Phi$ is an $\Sym_r$-equivariant map.
Furthermore, if the image $\mathrm{im}(\Phi)$ of the map $\Phi$ contains $0\in W_r^{\oplus M}$ then any partition in $\Phi^{-1}(0)$ is a witness that a fair iterated distribution between the thieves is possible, and consequently that 
\[
M(rt,r,d)\geq M.
\]

\medskip
Let us assume that $0\notin \mathrm{im}(\Phi)$, meaning that there are no fair iterated distribution between the thieves. 
Then the $\Sym_r$-equivariant map $\Phi$ factors as follows
\[
\xymatrix{
\conf(\rr^{d+1},r)^{*t}\ar[rr]^-{\Phi}\ar[dr]_{\Psi} &   &  W_r^{\oplus M}\\
&S(W_r^{\oplus M})\ar[ur]_{i}&
}
\]
where $\Psi\colon\conf(\rr^{d+1},r)^{*t}\longrightarrow S(W_r^{\oplus M})$ is the $\Sym_r$-equivariant map obtained by composing with the radial retraction to the unit sphere, and $i\colon S(W_r^{\oplus M})\longrightarrow W_r^{\oplus M}$ is the inclusion.
Thus, in order to conclude the proof Theorem \ref{theorem-main} we need to show that for $M\leq  \lceil \tfrac{td(r-1)+t}{r-1}-1 \rceil$ there is no $\Sym_r$-equivariant map $\conf(\rr^{d+1},r)^{*t}\longrightarrow S(W_r^{\oplus M})$.
More precisely we will prove that there cannot be any $\Z/r$-equivariant map $\conf(\rr^{d+1},r)^{*t}\longrightarrow S(W_r^{\oplus M})$ where $\Z/r$ is the subgroup of $\Sym_r$ generated by the cyclic permutation $(12\ldots r)$.

\medskip
At this point, we are ready to prove Theorem \ref{theorem-two-thieves} using only the Borsuk--Ulam theorem.

\begin{proof}[Proof of Theorem \ref{theorem-two-thieves}]
	If $r=2$ the space of $t$-iterated $2$-labeled partitions is formed only by iterated hyperplane partitions.  
	Let $M = t(d+1)-1$.  
	Then
	\[
	\conf(\rr^{d+1},2)^{*t} \simeq (S^d)^{*t} \cong ((\Z/2)^{*(d+1)})^{*t} \cong (\Z/2)^{*(d+1)t} \cong S^{t(d+1)-1}=S^M,
	\]  
	endowed with a free $\Z/2$-action.
	Furthermore, $W_2 = \rr$, so $S(W_2^{\oplus M}) \cong S^{M-1}$, also equipped with antipodal action.  
	The fact that there is no $\Z/2$-equivariant map  $\Psi \colon S^M \longrightarrow S^{M-1}$ is the content of the Borsuk--Ulam theorem.
	
	\smallskip
	For the second part of the theorem, we need a slight modification of our partitions.  
	Let $M = t(d+1)$, and let $\mathcal{Y}$ be the $2$-labeled empty partition.  
	That is, the partition of $\rr^d$ into just one set, equipped with a function $\ell:[1]\longrightarrow [2]$.  
	Let $\X$ be a $t$-iterated $2$-labeled partition of $\rr^d$, and $v$ a unit vector in $\rr^d$.  
	The space of partitions we use is $\mathcal{Z} = \{\K *_{(v,a)}\K': \K \in \mathcal{Y}, \K' \in \X, a \in \rr \cup \{-\infty,+\infty \}\}$.  
	Any partition of $\mathcal{Z}$ is a partition of $\rr^d$ into at most $2t+1$ parts using iterated hyperplane partitions.  We can parametrize $\mathcal{Z}$ by $\mathcal{Y} * \X$, and in turn this can be parametrized by $(\Z/2) * ((\Z/2)^{*(d+1)})^{*t} = (\Z/2)^{*(M+1)} \cong S^{M}$.  
	As before, the non-existence of a $\Z/2$-equivariant map $\Psi\colon S^M \to S^{M-1}$ concludes the proof.
\end{proof}

\medskip
For all primes $r$, including $r=2$, the non-existence of a $\Z/r$-equivariant map
\begin{equation}
	\label{eq : eq-map}
	\conf(\rr^{d+1},r)^{*t}\longrightarrow S(W_r^{\oplus M})
\end{equation}
can be established using the ideal valued index theory of Fadell and Husseini \cite{Fadell1988}.
First, we briefly recall the notion of the Fadell--Husseini ideal valued index and necessary properties in the generality we use them in our proof.

\medskip
Let $G$ be a finite group, and let $X$ be a $G$-space.
The {\em Fadell--Husseini index} of the space $X$ with respect to the group $G$ and the coefficients in the fields $\F$ is the  kernel of the following map in cohomology
\begin{eqnarray*}
	\ind_G (X;\F) &:=& 
	\ker\big( H^*(\mathrm{E}G\times_G\mathrm{pt};\F)\longrightarrow H^*(\mathrm{E}G\times_G X;\F)\big)\\
	&=& \ker\big( H^*(\mathrm{B}G;\F)\longrightarrow H^*(\mathrm{E}G\times_G X;\F)\big)\\
	&=& \ker\big( H^*(G;\F)\longrightarrow H^*(\mathrm{E}G\times_G X;\F)\big)\subseteq H^*(G;\F),
\end{eqnarray*}
that is induced by the $G$-equivariant projection $\pi_X\colon X\longrightarrow\mathrm{pt}$ that further on induces the continuous map $\pi_X\times_G\mathrm{id}\colon \mathrm{E}G\times_G X\longrightarrow \mathrm{E}G\times_G\mathrm{pt}$.
Here $\mathrm{pt}$ denotes the point equipped with the trivial $G$-action.
Silently we assume natural isomorphisms $ H^*(\mathrm{E}G\times_G\mathrm{pt};\F)\cong H^*(\mathrm{B}G;\F)\cong H^*(G;\F)$. 
The key feature of the Fadell--Husseini index, that allow us to answer questions about the existence of $G$-equivariant maps, is the following monotonicity property \cite[p.\,74]{Fadell1988}: If $X$ and $Y$ are $G$-spaces, and if $f\colon X\longrightarrow Y$ is a continuous $G$-equivariant map, then
\[
\ind_G (X;\F)\supseteq \ind_G (Y;\F).
\]
For our proof $G=\Z/r$ and we fix the notation of the cohomology of the cyclic group $\Z/r$ with coefficients in the field $\F_r$ as follows:
\[
H^*(\Z/r;\F_r)=
\begin{cases}
	\F_r[x], & \text{for }r=2, \text{ where }\deg(x)=1,\\
	\F_r[x]\otimes \Lambda[y], & \text{for }r\geq3, \text{ where }\deg(x)=2\text{ and }\deg(y)=1.
\end{cases}
\]
Here $\Lambda[\,\cdot\,]$ denotes the exterior algebra.

\medskip
Now, in order to prove the non-existence of the $\Z/r$-equivariant map \eqref{eq : eq-map} we will show that
\begin{compactenum}[\rm \qquad(a)]
\item 
$\ind_{\Z/r} (\conf(\rr^{d+1},r)^{*t};\F_r)=H^{\geq t(d(r-1)+1)}(\Z/r;\F_r)$, and
\item
$\ind_{\Z/r} (S(W_r^{\oplus M});\F_r)=H^{\geq M(r-1)}(\Z/r;\F_r)$.
\end{compactenum}
From the assumption of the theorem $M\leq  \lceil \tfrac{td(r-1)+t}{r-1}-1 \rceil$ we have that 
\[
M(r-1)\leq (r-1)  \lceil \tfrac{td(r-1)+t}{r-1}-1 \rceil< t(d(r-1)+1).
\]
Thus, for $r=2$ we have
\[
x^{M(r-1)}\in \ind_{\Z/r} (S(W_r^{\oplus M});\F_r){\setminus}\ind_{\Z/r} (\conf(\rr^{d+1},r)^{*t};\F_r),
\]
while for $r\geq3$ we obtain
\[
x^{M(r-1)/2}\in \ind_{\Z/r} (S(W_r^{\oplus M});\F_r){\setminus}\ind_{\Z/r} (\conf(\rr^{d+1},r)^{*t};\F_r),
\]
implying that
\[
\ind_{\Z/r} (\conf(\rr^{d+1},r)^{*t};\F_r)
\not\supseteq
\ind_{\Z/r} (S(W_r^{\oplus M});\F_r).
\]
Consequently, the $\Z/r$-equivariant map \eqref{eq : eq-map} cannot exist, and the proof of the theorem is concluded.
Hence, it remains to verify the index evaluations and the proof of the theorem is complete.

\subsubsection*{Evaluation of $\ind_{\Z/r} (\conf(\rr^{d+1},r)^{*t};\F_r)$:}
In the case when $t=1$ the claim that 
\[
\ind_{\Z/r} (\conf(\rr^{d+1},r);\F_r)=H^{\geq d(r-1)+1}(\Z/r;\F_r)
\]
is a content of \cite[Thm.\,6.1]{Blagojevic2012}.
This result can also be deduced from the Vanishing Theorem of Frederick~R.~Cohen~\cite[Theorem~8.2, page~268]{Cohen1976LNM533}.

For $t\geq 2$ we compute the index from the Serre spectral sequence associated to the fibration
\begin{equation}
	\label{eq-ss}
	\xymatrix{
\conf(\rr^{d+1},r)^{*t}\ar[r]   &  \mathrm{E}\Z/r\times_{\Z/r}\conf(\rr^{d+1},r)^{*t}\ar[r] &  \mathrm{B}\Z/r.
}
\end{equation}
The $E_2$-term of the spectral sequence is of the form
\[
E^{p,q}_2=H^p(\mathrm{B}\Z/r;\mathcal{H}^q(\conf(\rr^{d+1},r)^{*t};\F_r))\cong H^p(\Z/r;H^q(\conf(\rr^{d+1},r)^{*t};\F_r)).
\]
Here $\mathcal{H}(\,\cdot\,)$ denotes the local coefficients determined by the action of $\pi_1(\mathrm{B}\Z/r)\cong\Z/r$ on the cohomology $H^q(\conf(\rr^{d+1},r)^{*t};\F_r)$.
Recall that 
\[
\ind_{\Z/r} (\conf(\rr^{d+1},r)^{*t};\F_r)\cong \ker (E^{*,0}_2\longrightarrow E^{*,0}_{\infty}).
\]
In order to proceed with the computation of the spectral sequence we need to understand the coefficients, these are cohomologies $H^*(\conf(\rr^{d+1},r)^{*t};\F_r)$, as $\F_r[\Z/r]$-modules.
For that we use the K\"unneth theorem for joins and have that
\[
\widetilde{H}^{i+t-1}(\conf(\rr^{d+1},r)^{*t};\F_r)\cong
\sum_{a_1+\cdots+a_t=i}\widetilde{H}^{a_1}(\conf(\rr^{d+1},r);\F_r)\otimes\cdots\otimes \widetilde{H}^{a_t}(\conf(\rr^{d+1},r);\F_r),
\]
where the action of $\Z/r$ on the tensor product is the diagonal action.
The cohomology of the configuration space $H^*(\conf(\rr^{d+1},r);\F_r)$, as a $\F_r[\Z/r]$-module, was described in \cite[Proof of Thm.\,8.5]{Cohen1976LNM533} and \cite[Thm.\,3.1, Cor.\,6.2]{Blagojevic2012}.
In summary,
\[\
H^q(\conf(\rr^{d+1},r);\F_r)={\small
\begin{cases}
	\F_r, & \text{for }q=0,\\
	\F_r[\Z/r]^{a_q}, & \text{for }q=dj\text{ where }1\leq j\leq r-2,  \\
	\F_r[\Z/r]^{\tfrac{(r-1)!-r+1}r}\oplus\big(\F_r[\Z/r]/\F_r \big),  & \text{for }q=d(r-1),\\
	0, & \text{otherwise},
\end{cases}}
\]
for some integers $a_q\geq 1$.
Simply, the reduced cohomology $\widetilde{H}^q(\conf(\rr^{d+1},r);\F_r)$ is a free $\F_r[\Z/r]$-module if and only if $q\neq d(r-1)$.
Then, according to \cite[Lem.\,VI.11.7]{Hilton1997}, we have that the reduced cohomology of the join
$\widetilde{H}^{q}(\conf(\rr^{d+1},r)^{*t};\F_r)$ is a free $\F_r[\Z/r]$-module if and only if $q\neq t(d(r-1)+1)-1$.
\begin{figure}
\centerline{\includegraphics[scale=0.78]{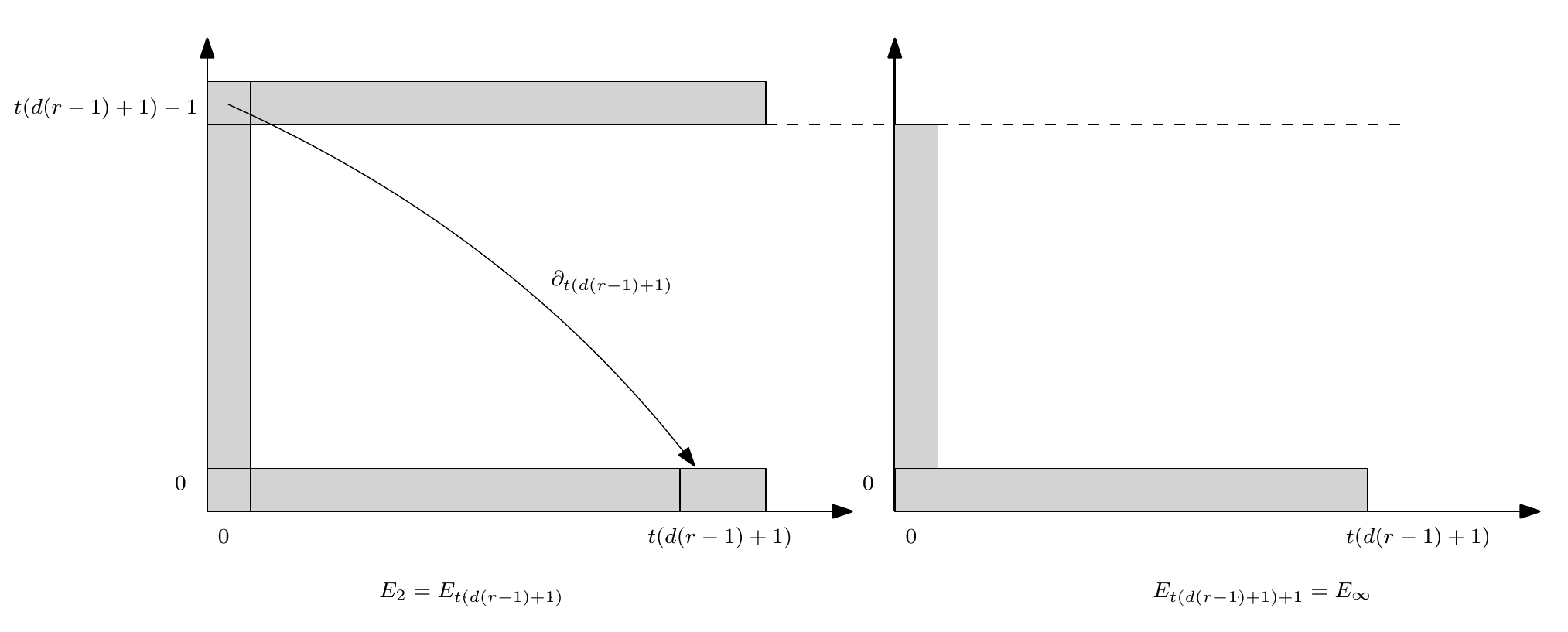}}
\caption{\small The Serre spectral sequence associated to the fibration \eqref{eq-ss}.}
\label{figure-ss}
\end{figure}
Since, $H^j(\Z/r;S)=0$ for all $j\geq1$ when $S$ is a free (projective) module we have that   
\[
E^{p,q}_2=
{\small
\begin{cases}
	H^p(\Z/r;\F_r), & \text{for }q=0,\\
	H^{q}(\conf(\rr^{d+1},r)^{*t};\F_r)^{\Z/r}, & \text{for }p=0,  \\
	\neq 0,  & \text{for }p\geq 0 \text{ and }q=t(d(r-1)+1)-1,\\
	0, & \text{otherwise}.
\end{cases}}
\]
Since the multiplication by $x$ in the cohomology of the group $\Z/r$ is an isomorphism, and 
$E^{p,q}_2=0$ for $p\geq 1$ and $1\leq q\leq  t(d(r-1)+1)-2$ we have that all the differentials $\partial_2,\ldots,\partial_{t(d(r-1)+1)-1}$ have to vanish, and so $E_2^{p,q}\cong\cdots\cong E_{t(d(r-1)+1)}^{p,q}$, see the illustration in Figure \ref{figure-ss}.
The only possible non-zero differential is $\partial_{t(d(r-1)+1)}$.
This means that 
\[
E_2^{p,0}\cong\cdots\cong E_{t(d(r-1)+1)}^{p,0}\cong E_{t(d(r-1)+1)+1}^{p,0}\cong E_{\infty}^{p,0}
\]
for $0\leq p\leq t(d(r-1)+1)-1$.
Consequently,
\[
\ind_{\Z/r} (\conf(\rr^{d+1},r)^{*t};\F_r)\subseteq H^{\geq t(d(r-1)+1)}(\Z/r;\F_r).
\]

\medskip
On the other hand $\conf(\rr^{d+1},r)$ and $\conf(\rr^{d+1},r)^{*t}$ are free $\Z/r$-spaces and therefore
\begin{equation}
\label{eq : hotopyeq}
	 \mathrm{E}\Z/r\times_{\Z/r}\conf(\rr^{d+1},r)^{*t} \simeq \conf(\rr^{d+1},r)^{*t}/(\Z/r).
\end{equation}
There exists an $\Sym_r$ simplicial complex $\mathfrak{C}(d+1,r)$ of dimension $d(r-1)$ that is $\Sym_r$-homotopy equivalent to the configuration space $\conf(\rr^{d+1},r)$; it was obtained in the construction of a particular $\Sym_r$-CW model for the configuration space in \cite{Blagojevic:2014ey}.
Consequently, its $t$-fold join $\mathfrak{C}(d+1,r)^{*t}$ is a $(t(d(r-1)+1)-1)$-dimensional $\Sym_r$-CW complex $\Sym_r$-homotopy equivalent to the join $\conf(\rr^{d+1},r)^{*t}$.
The homotopy equivalence \eqref{eq : hotopyeq} yields that
\begin{multline*}
	 H^i(\mathrm{E}\Z/r\times_{\Z/r}\conf(\rr^{d+1},r)^{*t};\F_r)\cong H^i(\conf(\rr^{d+1},r)^{*t}/(\Z/r);\F_r)\cong \\
	 H^i(\mathfrak{C}(d+1,r)^{*t}/(\Z/r);\F_r)=0
\end{multline*}
for all $i\geq t(d(r-1)+1)$.
In particular, since $\dim(\mathfrak{C}(d+1,r)^{*t}/(\Z/r))=t(d(r-1)+1)-1$, this means that $E^{p,0}_{\infty}=0$ for all $p\geq t(d(r-1)+1)$, implying the inclusion
\[
\ind_{\Z/r} (\conf(\rr^{d+1},r)^{*t};\F_r)\supseteq H^{\geq t(d(r-1)+1)}(\Z/r;\F_r).
\]
Thus we have verified that
\[
\ind_{\Z/r} (\conf(\rr^{d+1},r)^{*t};\F_r)= H^{\geq t(d(r-1)+1)}(\Z/r;\F_r).
\]

\subsubsection*{Evaluation of $\ind_{\Z/r} (S(W_r^{\oplus M});\F_r)$:}
Since $r$ is a prime the sphere $S(W_r^{\oplus M})$ is a free $\Z/r$-space.
Thus there is a homotopy equivalence 
\[
  \mathrm{E}\Z/r\times_{\Z/r} S(W_r^{\oplus M}) \simeq S(W_r^{\oplus M})/(\Z/r).
\]
Consequently, for all $i\geq \dim S(W_r^{\oplus M})+1=M(r-1)$ the cohomology has to vanish, that is 
\[
H^{i}(\mathrm{E}\Z/r\times_{\Z/r} S(W_r^{\oplus M});\F_r)\cong H^{i}(S(W_r^{\oplus M})/(\Z/r);\F_r)=0.
\]
Therefore,
\[
\ind_{\Z/r} (S(W_r^{\oplus M});\F_r)\supseteq H^{\geq M(r-1)}(\Z/r;\F_r).
\]
On the other hand the sphere $S(W_r^{\oplus M})$ is an $(M(r-1)-1)$-dimensional and $(M(r-1)-2)$-connected space.
Hence, the Serre spectral sequence associated to the fibration
\[
\xymatrix{
S(W_r^{\oplus M})\ar[r]   &  \mathrm{E}\Z/r\times_{\Z/r}S(W_r^{\oplus M})\ar[r] &  \mathrm{B}\Z/r
}
\]
yields that the induces map
$
H^i(\mathrm{E}\Z/r\times_{\Z/r}\mathrm{pt};\F_r)\longrightarrow H^i(\mathrm{E}\Z/r\times_{\Z/r} S(W_r^{\oplus M});\F_r)
$
is an injection for all $0\leq i\leq M(r-1)-1$.
Thus,
\[\ind_{\Z/r} (S(W_r^{\oplus M});\F_r)=H^{\geq M(r-1)}(\Z/r;\F_r),\]
as we have claimed.

\section{Upper bounds and additional examples}\label{section-upperbounds}
In this section we present upper bounds for the functions $M''(n,r,d)$ and $M(n,r,d)$.

\begin{claim}
Let $d\geq1$, $n\geq1$ and $r\geq2$ be integers.
Then 
$
M''(n,r,d) \le \left\lfloor \frac{d(n-1)}{r-1}\right\rfloor
$.
\end{claim}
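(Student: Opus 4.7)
The plan is the standard dimension-counting argument for upper bounds in measure-partitioning problems, executed via Sard's theorem and a perturbation of the measures. Set $M := \lfloor d(n-1)/(r-1)\rfloor + 1$, so that $M(r-1) > d(n-1)$. The goal is to exhibit $M$ absolutely continuous probability measures on $\rr^d$ that admit no fair $r$-labeled iterated hyperplane partition into $n$ parts.

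An iterated hyperplane partition of $\rr^d$ into $n$ parts is specified by a finite combinatorial type---a rooted binary cut tree $T$ with $n$ leaves together with a labeling $\ell\colon [n]\to[r]$---and the positions of the $n-1$ oriented affine hyperplanes used. Only finitely many combinatorial types $(T,\ell)$ occur, and for each fixed type the parameter space $\mathcal{P}_{T,\ell}$ is a smooth manifold of dimension $d(n-1)$, since each oriented affine hyperplane in $\rr^d$ contributes $d$ parameters. Given $M$ absolutely continuous probability measures $\boldsymbol\mu$, the fairness deviation map
\[
\Phi_{T,\ell,\boldsymbol\mu}\colon\mathcal{P}_{T,\ell}\longrightarrow \rr^{M(r-1)},\qquad \K\longmapsto \Big(\mu_j\big(\textstyle\bigcup_{i\in\ell^{-1}(s)}K_i\big)-\tfrac{1}{r}\Big)_{j\in[M],\,s\in[r-1]},
\]
is smooth, and a fair distribution exists if and only if $0$ lies in the finite union $\bigcup_{(T,\ell)}\Phi_{T,\ell,\boldsymbol\mu}(\mathcal{P}_{T,\ell})$. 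Since $\dim\mathcal{P}_{T,\ell}=d(n-1)<M(r-1)$, Sard's theorem forces each image to have Lebesgue measure zero in $\rr^{M(r-1)}$, and hence so does their finite union.

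To actually avoid the specific point $0$, I introduce a rich auxiliary family $\boldsymbol\mu(p)$ parameterized by $p$ ranging over a high-dimensional space. One natural choice is to take each $\mu_j(p)$ to be a convex combination $\sum_s \alpha_j^s\tilde\mu_j^s$ of finitely many disjointly supported fixed probability measures with weights $\alpha_j=(\alpha_j^s)_s$ forming the $p$-parameters; another is to let each $\mu_j(p)$ be a small Gaussian bump centered at a point $p_j\in\rr^d$. Applying parametric transversality (equivalently, Sard to the combined map $(\K,p)\mapsto \Phi_{T,\ell,\boldsymbol\mu(p)}(\K)$) then shows that for almost every $p$, the point $0$ does not belong to $\Phi_{T,\ell,\boldsymbol\mu(p)}(\mathcal{P}_{T,\ell})$. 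Taking a finite union over $(T,\ell)$, a generic choice of $p$ yields measures $\boldsymbol\mu(p)$ admitting no fair iterated hyperplane distribution, which gives $M''(n,r,d) \le M - 1 = \lfloor d(n-1)/(r-1)\rfloor$.

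The main obstacle is verifying that the derivative with respect to $p$ of the combined map spans the $M(r-1)$-dimensional target direction at every preimage of $0$; equivalently, varying the auxiliary parameters should move each fairness coordinate independently. With the convex-combination family, this amounts to checking that the derivative matrix $\big(\tilde\mu_j^s(A_{s'})\big)_{s,s'}$ has full rank on the relevant thief-regions $A_{s'}=\bigcup_{i\in\ell^{-1}(s')}K_i$, which holds generically since the supports of the $\tilde\mu_j^s$ can be chosen in sufficiently general position relative to the partition structure. This genericity verification is the only delicate point in the argument.
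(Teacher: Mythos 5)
Your strategy (parameter counting made rigorous via Sard and parametric transversality) identifies the right numerology, but the proof has a genuine gap exactly at the step you flag as ``delicate,'' and that step is the entire content of the upper bound. The easy half of Sard only gives that the set of achievable fairness vectors has measure zero in $\rr^{M(r-1)}$, which says nothing about the particular point $0$; to exclude $0$ for almost every $p$ you need the combined map $(\K,p)\mapsto\Phi_{T,\ell,\mu(p)}(\K)$ to be transverse to $0$ at \emph{every} point of its zero set, for a \emph{single} auxiliary family working simultaneously for all types $(T,\ell)$ and all partitions in the zero set. This is not a routine genericity assertion. For your translated-bump family the $p_j$-derivative has rank at most $d$ on the $j$-th block of the target, while that block is $(r-1)$-dimensional, so for $r-1>d$ the $p$-derivative alone cannot be surjective and the missing rank would have to come from the $\K$-derivative, over which you have no control. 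For the convex-combination family, the full-rank condition on $\bigl(\tilde\mu_j^s(A_{s'})\bigr)$ must hold at all zeros at once, and ``supports in general position relative to the partition structure'' has no fixed meaning when the partition ranges over the entire (unknown) zero set. There are also smaller points left open: smoothness of $\K\mapsto\mu_j(K_i)$ at degenerate parameters and at $a=\pm\infty$, where $\mathcal{P}_{T,\ell}$ is not a boundaryless manifold.

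The paper's proof sidesteps all of this with an elementary incidence count that exploits the \emph{iterated} structure you never use beyond dimension counting. Concentrate each measure in a tiny ball around a point, the points in general position. Each of the $n-1$ hyperplanes can meet at most $d$ of the balls (else $d+1$ points would be nearly affinely dependent), giving at most $d(n-1)$ hyperplane--ball incidences. On the other hand, since the cuts are nested, a ball met by $k$ hyperplanes is divided into at most $k+1$ pieces, so for all $r$ thieves to receive a positive share of that measure one needs $k\ge r-1$, giving at least $(r-1)M''$ incidences. Hence $(r-1)M''(n,r,d)\le d(n-1)$. To repair your argument you would need to carry out the transversality verification in full; the explicit construction above shows it can be avoided entirely.
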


 \begin{proof}
If we are allowed to use iterated hyperplane partitions of $\rr^d$ into $n$ parts, we are using $n-1$ hyperplanes. Suppose each measure is concentrated near a point and those points are in general position.  Then, each of the $n-1$ subdividing hyperplanes can cut at most $d$ measures.  However, each measure needs to be cut at least $r-1$ times in order to be distributed among $r$ thieves. 
	Consequently, \[d \cdot (n-1) \ge (r-1)M''(n,r,d).\vspace{-10pt}\]
 \end{proof}

\noindent
For the case $r=2$ this is similar to our bounds in Theorem \ref{theorem-two-thieves} except for a factor of two.

\begin{figure}
\centerline{\includegraphics[scale=0.80]{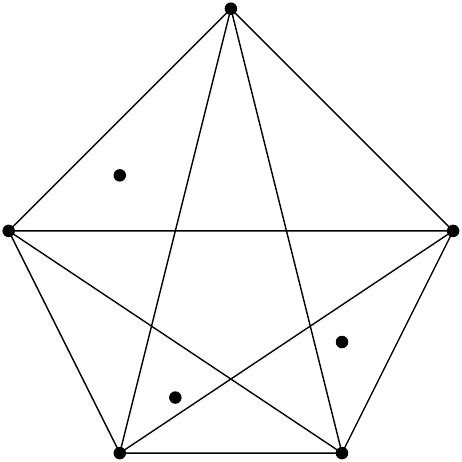}}
\caption{Set of eight points showing that $M(5,2,2) < 8$.}
\label{figure-extra}
\end{figure}

\medskip
The upper bounds obtained by this argument are not optimal.  
For example, let us consider the case when $r=2$ and $n=3$. 
The degree-counting argument above shows that $M' \le 2d$, while the following claim improves this significantly.

\begin{claim}
\label{claim-01}
Let $d\geq 1$ and $r\geq 2$ be integers. 
Then $M(2r-1,r,d) \le d+1$.
\end{claim}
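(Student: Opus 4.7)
My plan is to exhibit a collection of $d+2$ measures in $\rr^d$ for which no fair $r$-labelled convex partition into $2r-1$ parts exists. Choose points $p_1,\dots,p_{d+2}\in\rr^d$ with $p_{d+2}$ in the relative interior of the simplex $\conv(p_1,\dots,p_{d+1})$, and for a small $\epsilon>0$ let $\mu_i$ be the uniform probability measure on the ball $B_\epsilon(p_i)$. The first step is a pigeonhole on the labelling $\ell\colon[2r-1]\to[r]$: since $2r-1<2r$, some thief receives fewer than two parts, and since each thief needs positive measure, exactly one. Call this part $K$; fairness then forces $\mu_i(K)=1/r$ for every $i$.

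Next I would analyse the shape of $K$. Whenever two convex parts share a $(d-1)$-dimensional interface, that interface lies in a single hyperplane, because two convex sets cannot meet along a non-flat face. Hence $K$ is a convex polyhedron with flat facets. Moreover, no other part of the partition can meet $K$ along two distinct facets $F_1, F_2$ of $K$: being convex, such a part would contain $\conv(F_1\cup F_2)$, a set that meets the interior of $K$. Each facet of $K$ borders an adjacent part, and each of the $2r-2$ other parts can border $K$ along at most one facet, so $K$ has at most $2r-2$ facets.

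The final step is a compactness/limit argument. Since $1/r\in(0,1)$ and $\mu_i$ is supported in $B_\epsilon(p_i)$, the boundary $\partial K$ must meet each ball; choose $q_i\in\partial K\cap B_\epsilon(p_i)$ and let $H_i^\epsilon$ denote the facet hyperplane on which $q_i$ lies, so that $K\subseteq H_i^{\epsilon,+}$. Sending $\epsilon\to 0$ along a subsequence, using compactness of $S^{d-1}$ for the normals and noting that the offset of any such $H_i^\epsilon$ stays bounded because it carries $q_i\to p_i$, I would extract a limiting convex polyhedron $K_0$ (with at most $2r-2$ facets) such that each $p_i$ lies on a supporting hyperplane $H_i$ of $K_0$ with $K_0\subseteq H_i^+$. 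Since every $p_j\in\partial K_0\subseteq H_i^+$, the hyperplane $H_i$ supports $\conv(p_1,\dots,p_{d+2})$ at $p_i$, so $p_i\in\partial\conv(p_1,\dots,p_{d+2})$. Applied to $i=d+2$, this contradicts $p_{d+2}\in\interior\conv(p_1,\dots,p_{d+1})$, so for $\epsilon$ sufficiently small no fair partition exists, yielding $M(2r-1,r,d)\le d+1$. The main technical hurdle is making this limit rigorous: one must rule out the possibility that some facet hyperplanes of $K$ drift off to infinity in a way that leaves a $p_i$ without a supporting facet of $K_0$ through it.
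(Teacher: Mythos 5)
Your measures and your pigeonhole step (some thief receives exactly one part $K$, hence $\mu_i(K)=1/r$ for every $i$) coincide with the paper's, but your endgame is genuinely different and far heavier than necessary. The paper finishes in one stroke of convexity: since $\mu_i(K)=1/r>0$ for $i=1,\dots,d+1$, the convex set $K$ contains a point $q_i\in B_\epsilon(p_i)$ for each vertex of the simplex, hence contains $\conv(q_1,\dots,q_{d+1})$; because $p_{d+2}$ lies in the interior of $\conv(p_1,\dots,p_{d+1})$, for $\epsilon$ small enough this simplex contains all of $B_\epsilon(p_{d+2})$, so $\mu_{d+2}(K)=1\neq 1/r$. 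This works for a single fixed small $\epsilon$, with no facet analysis and no limit. Your route through supporting hyperplanes can be completed, but note that the hurdle you flag at the end only arises because you introduced structure you do not need: the polyhedrality of $K$, the bound of $2r-2$ facets, and the limiting body $K_0$ are all dispensable. All the argument requires is one supporting hyperplane $H^\epsilon=\{x:\langle x,v^\epsilon\rangle=a^\epsilon\}$ of $K$ at a boundary point $q_{d+2}^\epsilon\in\partial K\cap B_\epsilon(p_{d+2})$, which exists for any closed convex set with nonempty interior by the supporting hyperplane theorem; its offset $a^\epsilon=\langle q_{d+2}^\epsilon,v^\epsilon\rangle$ stays bounded, its normals subconverge, and the inequalities $\langle q_j^\epsilon,v^\epsilon\rangle\geq a^\epsilon$ for points $q_j^\epsilon\in K\cap B_\epsilon(p_j)$ pass to the limit, exhibiting a hyperplane that supports $\conv(p_1,\dots,p_{d+2})$ at the interior point $p_{d+2}$ --- a contradiction with no reference to $K_0$. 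As written, however, your proof stops short at exactly the step you identify, and the facet-counting paragraph is a detour; I recommend replacing the entire second half with the direct observation that a convex set meeting all $d+1$ outer balls must swallow the inner one.
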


\begin{proof}
It suffices to exibit a collection of $d+2$ measures that cannot be split among $r$ thieves using $2r-1$ parts.  Consider $d+1$ measures concentrated each near some vertex of a non-degenerate simplex in $\rr^d$, and one measure concentrated near a point in the interior of the simplex.

\medskip
If a fair distribution among $r$ thieves exists using a partition into $2r-1$ parts, there is a thief that received exactly one part $K$.  Thus, the convex set $K$ must have points from each of the first $d+1$ measures.  This would force the lucky thief to have all of the last measure, which would not be acceptable by the rest.
\end{proof}

\begin{corollary}
Let $d\geq 1$ be an integer. 
Then $M''(3,2,d) = d+1$.
\end{corollary}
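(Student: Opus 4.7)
The plan is to sandwich the value $M''(3,2,d)$ between matching lower and upper bounds, both of which are already available from results stated earlier in the paper. Concretely, the lower bound comes from Theorem \ref{theorem-two-thieves} and the upper bound comes from Claim \ref{claim-01}, linked by the trivial monotonicity inequality $M''(n,r,d) \le M(n,r,d)$.

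For the lower bound, I would specialize Theorem \ref{theorem-two-thieves} to the parameters $t=1$, $r=2$, so that the partition uses $2t+1=3$ convex parts formed by iterated hyperplane cuts. The theorem then yields
\[
M''(3,2,d) \;\geq\; 1\cdot(d+1) \;=\; d+1.
\]
In concrete terms, this means that for any $d+1$ absolutely continuous probability measures $\mu_1,\dots,\mu_{d+1}$ on $\rr^d$, one can find an iterated hyperplane partition of $\rr^d$ into three convex pieces together with a $2$-labeling that splits each $\mu_j$ into equal halves between the two thieves.

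For the upper bound, I would apply Claim \ref{claim-01} with $r=2$ to get $M(3,2,d) \le d+1$. Since an iterated hyperplane partition is in particular a convex partition, the definitions in Definition \ref{def-M'} immediately give the inequality $M''(3,2,d) \le M(3,2,d)$, so
\[
M''(3,2,d) \;\leq\; M(3,2,d) \;\leq\; d+1.
\]
Combining the two bounds yields the desired equality $M''(3,2,d) = d+1$.

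There is no real obstacle here, as all the work has been done in Theorem \ref{theorem-two-thieves} and Claim \ref{claim-01}; the only thing to verify is that the extremal example exhibited in the proof of Claim \ref{claim-01} (namely, $d+1$ measures concentrated near the vertices of a nondegenerate simplex in $\rr^d$ together with one measure concentrated near an interior point) indeed obstructs any $2$-labeled convex partition into $3$ parts, not merely an iterated hyperplane one. This is automatic because the obstruction argument uses only the convexity of the piece received by the ``lucky'' thief, and makes no use of the hyperplane structure.
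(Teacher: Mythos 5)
Your proposal is correct and matches the paper's own proof exactly: the lower bound $M''(3,2,d)\geq d+1$ comes from Theorem \ref{theorem-two-thieves} with $t=1$, and the upper bound from Claim \ref{claim-01} with $r=2$ together with $M''(3,2,d)\leq M(3,2,d)$. Nothing further is needed.
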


\begin{proof}
From Theorem \ref{theorem-two-thieves} and Claim \ref{claim-01} we deduce that
$d+1 \le M''(3,2,d) \le M(3,2,d) \le d+1$.
\end{proof}

This upper bound shows that the behavior of $M(n,r,d)$ indeed has sudden sharp increases of values.  
More precisely, we have $M(r,r,d) = d$, $M(2r-1, r, d) \le d+1$ and $M(2r,r,d) \ge 2d$ (for $r$ a prime number).  Thus, $M(n,r,d)$ can exhibit sharp increases as $n$ grows.

\begin{claim}
$M(5,2,2) \le 7$.
\end{claim}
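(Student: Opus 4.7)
My plan is to imitate the strategy of Claim~\ref{claim-01}: exhibit 8 probability measures in $\rr^2$ that no 2-labeled convex 5-partition can fairly split between two thieves. Guided by Figure~\ref{figure-extra}, I would place the 8 points as follows: $p_1,\ldots,p_5$ at the vertices of a regular pentagon, and $p_6,p_7,p_8$ as three additional points strictly inside the pentagon, chosen in sufficiently general position so that no three of the eight points are collinear. Let $\mu_j$ be the uniform probability measure on a small disk around $p_j$, where the disks around $p_6,p_7,p_8$ are small enough to lie strictly inside the pentagon and all eight disks are pairwise disjoint.

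Suppose toward contradiction that a convex partition $\K=(K_1,\ldots,K_5)$ with labeling $\ell\colon [5]\longrightarrow[2]$ fairly distributes the $\mu_j$, and set $L_s:=\bigcup_{i\in\ell^{-1}(s)}K_i$. The fairness condition $\mu_j(L_s)=\tfrac{1}{2}$ together with smallness of the disks forces each $p_j$ to lie on the common boundary $\partial L_1=\partial L_2$, with the boundary bisecting each small disk locally. Up to swapping the two thieves, the label sizes $(|\ell^{-1}(1)|,|\ell^{-1}(2)|)$ are either $(1,4)$ or $(2,3)$.

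In the $(1,4)$ case, $L_1$ is a single convex cell $K$, so all 8 points lie on $\partial K$; since $p_1,\ldots,p_5$ are in convex position we must have $\conv\{p_1,\ldots,p_5\}\subseteq K$, and hence $p_6,p_7,p_8\in\interior(\conv\{p_1,\ldots,p_5\})\subseteq\interior(K)=\interior(L_1)$, forcing $\mu_j(L_1)=1$ for $j=6,7,8$, a contradiction. The $(2,3)$ case, where $L_1=K_a\cup K_b$ splits into two convex cells, is the main obstacle. My plan is to exploit that the pentagon vertices assigned to $K_a$ (respectively $K_b$) must lie in that cell, and the two cells have disjoint interiors; combined with an analogous constraint from the three cells of $L_2$, this should force the cyclic sequence $p_1,\ldots,p_5$ to split between $K_a$ and $K_b$ in one of only a few cyclically-contiguous patterns. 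For each such pattern I expect to argue, by pigeonhole, that at least one cell of $L_2$ is forced to contain an interior point $p_j$ ($j\in\{6,7,8\}$) in its interior — contradicting $\mu_j(L_2)=\tfrac12$ — once $p_6,p_7,p_8$ are chosen generically. Ruling out both cases yields $M(5,2,2)\le 7$.
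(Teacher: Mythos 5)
Your case $(1,4)$ is handled, but the proposal has two genuine gaps. First, the configuration is underspecified: placing $p_6,p_7,p_8$ ``in sufficiently general position'' inside the pentagon is not enough. The property the argument actually needs --- and the reason Figure~\ref{figure-extra} exhibits a specific eight-point set rather than a generic one --- is that \emph{every} triangle spanned by three of the five outer points contains one of the three inner disks in its interior. (Three interior points clustered near one vertex are in general position yet useless; note also that for a regular pentagon the triangle on three consecutive vertices does not contain the center, so even ``all three near the center'' fails.) You never state, let alone verify, such a containment property, so there is nothing to produce the contradiction in the remaining case. Second, the $(2,3)$ case, which you yourself identify as the main obstacle, is left as a plan (``I expect to argue\dots''), not a proof; moreover the proposed enumeration of ``cyclically-contiguous patterns'' is not well founded, because an outer disk can be met by both $K_a$ and $K_b$, so the five outer points are not actually partitioned between the two cells of $L_1$.

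For comparison, the paper's proof avoids your case split entirely by a double pigeonhole: one thief receives at most two pieces; since that thief's region must meet every one of the five outer disks (he gets half of each outer measure), one of his at most two convex pieces meets at least three of them, hence essentially contains the triangle those three vertices span, hence --- by the containment property above --- contains one of the inner measures entirely, contradicting fairness for that measure. If you make the containment property explicit and verify it for a concrete placement of $p_6,p_7,p_8$, this single argument subsumes both of your cases and closes the gap.
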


\begin{proof}
Consider a set of eight measures concentrated near the vertices of the set described in Figure \ref{figure-extra}.  We distinguish the five external measures (near the vertices of the circumscribing pentagon) from the other three internal measures.  If we divide $\rr^2$ into five convex pieces to be distributed among two thieves, one of them must receive at most two pieces.  Of these two pieces, one must interesect at least three of the external measures.  However, this means that the piece contains all of one of the internal measures, which negates a fair distribution.
\end{proof}

\section{Open questions and remarks}\label{section-remarks}

One of the results mentioned in the introduction is that $M(r,r,d) = d$, established in  \cite{Soberon:2012kp} \cite{Karasev:2013gi} \cite{Blagojevic:2014ey}.  
However, it is not clear if this can be said for iterated hyperplane partitions.  One may immediately notice that $M''(r,r,d)=1$ if $r$ is odd.  To see this, consider two measures distributed uniformly in two concentric spheres of distinct radii.  If we find a fair distribution into $r$ equal pieces with nested hyperplane cuts, the first cut must simultaneously cut both measures into two sets $A_1$, $A_2$ of sizes $\frac{k}{r}$ and $\frac{r-k}{r}$ for some positive integer $k <r$.  
This determines the distance from this hyperplane from the center of the spheres, which must be positive as $\frac{k}{r} \neq \frac{1}{2}$.  However, each measure would require a different positive distance, which makes this impossible.  For $r$ even, the value of $M''(r,r,d)$ is still open.

\begin{problem}
Given $d$ measures in $\rr^d$, and $r$ an even integer is it possible to find a partition of $\rr^d$ into $r$ parts using an iterated hyperplane partition so that each of the resulting sets has the same size in each measure?  In other words, is it true that $M''(r,r,d) = d$ for $r$ even?
\end{problem}

One of the minimal open cases regards partitions using four nested hyperplanes.  We have the bounds $2d+1 \le M''(4,2,d) \le 3d$.  Finding the rate of growth of $M''(4,2,d)$ is an interesting problem.  In particular, in the plane we know that $M''(4,2,2)$ is either five or six, and determining the exact value is open.  For more than two thieves, the first open case for which Theorem \ref{theorem-main} applies gives the bound $M(6,3,2) \ge 4$.  Understanding these first cases would shed light on the general behavior of $M(n,r,d)$.

\begin{figure}[ht]
\centerline{\includegraphics[scale=0.85]{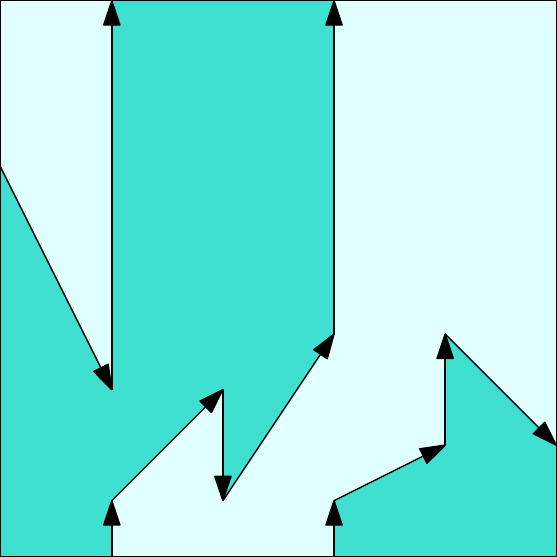}}
\caption{An example of a path $L$ with $t = 4$ that goes twice through infinity.}
\label{figure-path}
\end{figure}

The result above is trivial if $r=2^k$ for some $k$, as it suffices to make an iterated use of the Ham Sandwich theorem.  
A factorization argument also shows that if $M''(a,a,d) = d$ and $M''(b,b,d)=d$, then $M'(ab,ab,d)=d$.  Thus, it suffices to solve the problem above for $r=2q$ with $q$ an odd integer. 
 It has been shown that $M''(r,r,2) = 2$ for $r$ even if the two measures are uniformly distributed among two sets $A, B$ such that $A \subseteq B$, consult \cite{fruchard2016fair}.

\medskip
In $\rr^2$ we may consider only partitions as in the proof of Theorem \ref{theorem-two-thieves} where all the vectors involved in the recursive definitions of the partition are $(1,0)$.  More precisely, every time we consider partitions of the form $\mathcal{K} *_{(v,a)} \mathcal{K}'$, it is with $v=(1,0)$.  This forces most of the hyperplanes (in this case, lines) involved in the final partition to be vertical, with possibly a line cutting the space between two consecutive vertical lines.  This implies the following corollary.

\begin{corollary}
Suppose we are given $3t-1$ measures in $\rr^2$. Then, there is a piecewise-linear path $L$ such that
\begin{itemize}
	\item $L$ uses at most $2t$ turns,
	\item $L$ is $x$-monotone,
	\item half of the segments forming $L$ are vertical (we accept segments of length zero), and
	\item $L$ splits each measure by half.
\end{itemize}
(The path $L$ may go through infinity.)
\end{corollary}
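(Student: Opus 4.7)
The plan is to specialize the proof of Theorem \ref{theorem-two-thieves} in the case $d = 2$ to the setting where every join operation $\K *_{(v, a)} \K'$ in the recursive definition of the space of $t$-iterated $2$-labeled partitions uses the fixed direction $v = (1, 0)$. The partitions in the resulting space $\X$ have a transparent combinatorial structure: $t - 1$ vertical lines (some possibly at $\pm\infty$) cut $\rr^2$ into $t$ vertical strips, and each strip is further subdivided by a single line from some base partition in $\X_{1, 2}$, yielding at most $2t$ convex parts carrying the alternating labeling $1, 2, 1, 2, \ldots, 1, 2$.

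The key observation is that the topological identification $\X \cong \conf(\rr^3, 2)^{*t} \simeq (S^2)^{*t} \simeq S^{3t-1}$ and the free $\Z/2$-action on it used in the proof of Theorem \ref{theorem-two-thieves} do not depend on the choice of the join direction vectors. Consequently, the very same Borsuk--Ulam argument delivers, for any $3t - 1$ measures in $\rr^2$, a partition in $\X$ that equipartitions every measure between the two thieves.

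Next, I would extract the path $L$ as the common interface between the two thieves' regions: within each vertical strip this interface is a segment of the corresponding cut line, while on each separating vertical line between consecutive strips it is a vertical segment joining the endpoints of the two adjacent cut segments (possibly passing through the point at infinity, as in Figure \ref{figure-path}, when the orientations of the two consecutive cuts preclude a finite connection). Alternating the $t$ non-vertical cut segments with up to $t$ vertical connecting segments (padding with a zero-length vertical segment at an endpoint when only $t-1$ transitions arise) produces a piecewise-linear path with at most $2t$ segments, exactly half of which are vertical, and at most $2t - 1 \le 2t$ turns. Monotonicity in $x$ is automatic because the path traverses each vertical strip exactly once from left to right.

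The main obstacle is the bookkeeping of the vertical connecting segments when the orientations of the cut lines in two consecutive strips disagree: these are precisely the configurations in which $L$ is forced to go through infinity in order to stay $x$-monotone and continue to separate the two thieves' territories, and one has to verify that this passage can always be arranged within the allowed count of turns and vertical segments. The topological input, namely the index-theoretic computation showing non-existence of the relevant $\Z/2$-equivariant map, is already delivered by Theorem \ref{theorem-two-thieves} and requires no modification.
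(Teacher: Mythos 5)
Your proposal is correct and follows exactly the route the paper intends: the corollary is stated as an immediate consequence of specializing the proof of Theorem \ref{theorem-two-thieves} for $d=2$ to partitions where every join direction is $v=(1,0)$, so that the equipartition consists of $t$ vertical strips each cut by one line, and $L$ is the interface between the two thieves' regions. Your extra bookkeeping of the vertical connecting segments and the passages through infinity is a more careful account of what the paper leaves implicit, but it is the same argument.
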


An analogous result can be obtained with $2t+1$ turns and $3t$ measures using the second part of Theorem \ref{theorem-two-thieves}.  A similar problem has been considered if only vertical and horizontal segments are allowed.  It is conjectured that $n+1$ measures can be split evenly using a path that only has $n$ turns \cite{UNO:2009wk} \cite{Karasev:2016vd}, as opposed to the $\sim \frac{3n}{2}$ measures we can split with the result above.  If the condition on the directions of the paths is completely dropped, we get an interesting problem

\begin{problem}
Given a polygonal path in the plane that uses at most $n$ turns, what is the largest number of measures that we can always guarantee to be able to split simultaneously by half?
\end{problem}

Notice again that checking the degrees of freedom does not yield the optimal number.  With $n = 1$ this would suggest that any four measures can be split, but as we are dividing by a convex angle, no more than three measures can be split simultaneously.

\begin{figure}[ht]
\centerline{\includegraphics[scale=0.7]{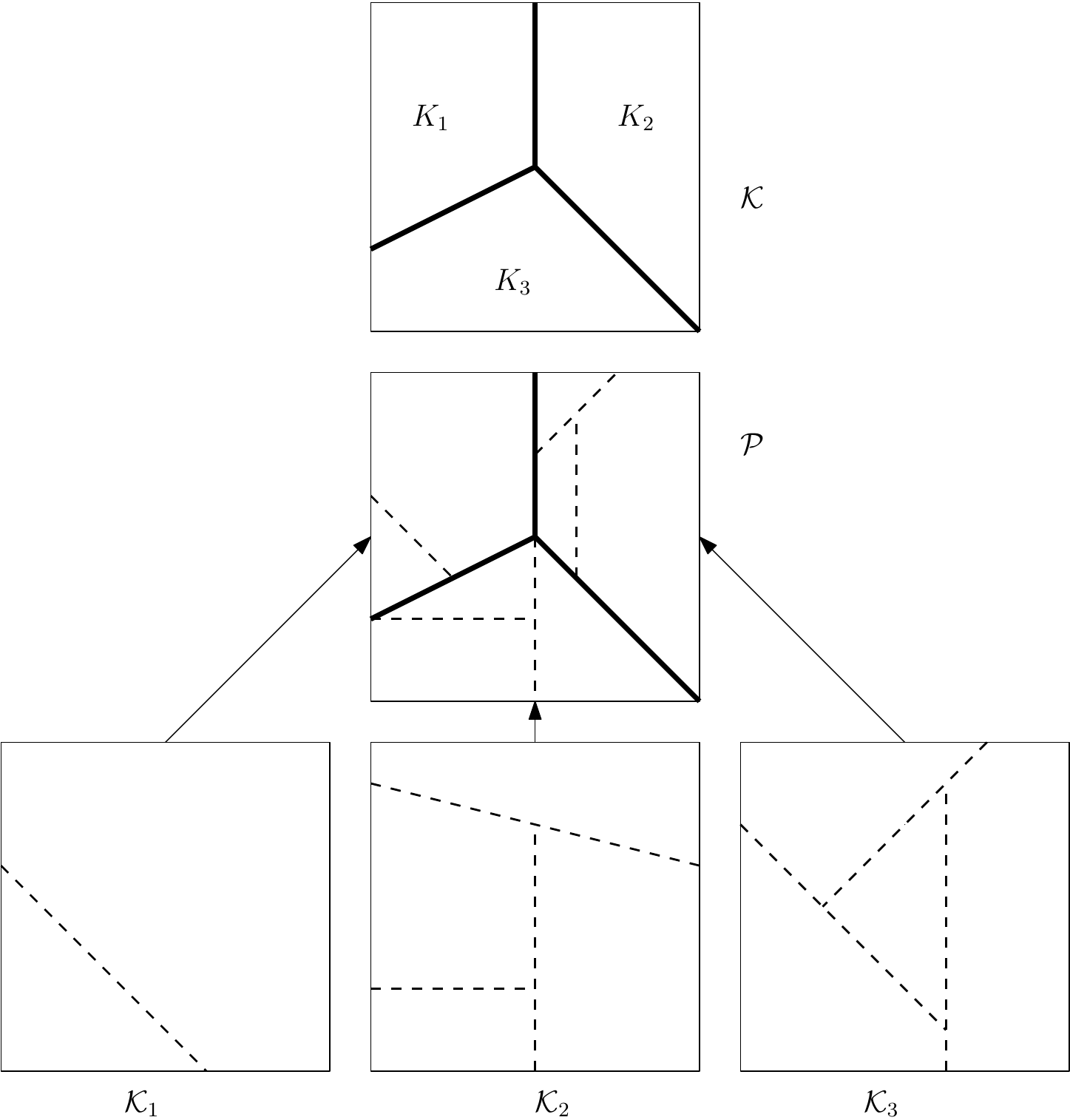}}
\caption{A partition of $\rr^2$ into eight parts formed by mixing three partitions $\K_1, \K_2, \K_3$ via a partition $\K$ with three parts.}\label{fig-window}
\end{figure}

\medskip
The construction of partitions we presented has a natural extension motivated by an analogy with the definition of the semi-direct product of groups as well as with the structural map of the little-cube operad.
For a given a partition $\K = (K_1, K_2, \ldots, K_r)$ of $\rr^d$ into $r$ parts, and further $r$ partitions $\K_1, \K_2, \ldots, \K_r$ of $\rr^d$, we can form a partition $(\K_1\times\cdots\times\K_r)\rtimes\K$ by
\[
(\K_1\times\cdots\times\K_r)\rtimes\K : = \{A \cap K_i : A \in \K_i, 1\le i \le r\}.
\]
In other words, $K_i$ is a ``window'' that lets us look at the partition $\K_i$, see Figure \ref{fig-window}.

\medskip
These configuration spaces are clearly different, and we get a richer family of partitions if we further modify $\K$.  
It is unclear how much improvement can be obtained from using this more general configuration spaces of partitions.

\medskip
\noindent
{\em Acknowledgements.} We are grateful to the referee for very careful and helpful comments on this paper.
We also thank Aleksandra Dimitrijevi\'c Blagojevi\'c for excellent observations on several drafts of this paper and many useful comments.

\bibliographystyle{amsalpha}

\newcommand{\etalchar}[1]{$^{#1}$}
\providecommand{\bysame}{\leavevmode\hbox to3em{\hrulefill}\thinspace}
\providecommand{\MR}{\relax\ifhmode\unskip\space\fi MR }
\providecommand{\MRhref}[2]{%
  \href{http://www.ams.org/mathscinet-getitem?mr=#1}{#2}
}
\providecommand{\href}[2]{#2}

\affiliationone{
   Pavle V. M. Blagojevi\'{c}\\
   Inst. Math., FU Berlin, Arnimallee 2, 14195 Berlin \\ Germany\\
   Mathematical Institute SANU, Knez Mihailova 36, 11001 Belgrade \\ Serbia
   \email{blagojevic@math.fu-berlin.de\\
   pavleb@mi.sanu.ac.rs}}
\affiliationtwo{
   Pablo Sober\'on\\
   Mathematics Department, Northeastern University, 360 Huntington Ave., 
Boston, MA 02115 \\ USA
   \email{p.soberonbravo@northeastern.edu}}

%
\end{document}